\newtheorem{obs} [subsection]{Remark}
\newtheorem{exm} [subsection]{Example}
\newtheorem{prop}[subsection]{Proposition}
\newtheorem{teor}[subsection]{Theorem}
\newtheorem*{teor*}{Theorem}
\newtheorem{lema}[subsection]{Lemma}
\newtheorem{cor} [subsection]{Corollary}
\newcommand{\Zng}{$\mathbb Z^n$-graded $S$-module}
\def\supp{\operatorname{supp}}
\def\gcd{\operatorname{gcd}}
\def\Ass{\operatorname{Ass}}
\def\Mon{\operatorname{Mon}}
\def\depth{\operatorname{depth}}
\def\sdepth{\operatorname{sdepth}}
\def\slength{\operatorname{slength}}
\begin{document}
\selectlanguage{english}
\frenchspacing
\numberwithin{equation}{section}

\title[On the Stanley length of monomial ideals]{On the Stanley length of monomial ideals}

\author[Mircea Cimpoea\c s]{Mircea Cimpoea\c s$^1$}
\date{}

\keywords{Stanley depth; Monomial ideals; Linear quotients}

\subjclass[2020]{05E40; 06A07; 13A15; 13C15; 13P10}

\footnotetext[1]{ \emph{Mircea Cimpoea\c s}, University Politehnica of Bucharest, Faculty of
Applied Sciences, 
Bucharest, 060042, Romania and Simion Stoilow Institute of Mathematics, Research unit 5, P.O.Box 1-764,
Bucharest 014700, Romania, E-mail: mircea.cimpoeas@upb.ro,\;mircea.cimpoeas@imar.ro}

\begin{abstract}
Let $S=K[x_1,\ldots,x_n]$ be the ring of polynomials in $n$ variables over an arbitrary field $K$. Given a finitely generated multigraded module $M$,
its Stanley length, denoted by $\slength(M)$, is the minimal length of a Stanley decomposition of $M$. 

Let $I\subset S$ be a monomial ideal, minimally generated by $m$ monomials. We give an upper bound for $\slength(I)$, 
in terms of its minimal monomial generators. Also, we give precise formulas for $\slength(I)$, if $n=2$ or $m=2$.
Also, we show that if $I$ has linear quotients, then $\slength(I)=m$, and the converse holds in some special cases.
\end{abstract}

\maketitle

\section{Introduction}

Let $K$ be a field and let $S=K[x_1,x_2,\ldots,x_n]$ be the ring of polynomials in $n$ variables over $K$.
Let $M$ be a \Zng. A Stanley decomposition of $M$ is a direct sum 
$$\mathcal D: M = \bigoplus_{i=1}^rm_i K[Z_i],$$ as $K$-vector spaces, 
where $m_i\in M$ are homogeneous, $Z_i\subset\{x_1,\ldots,x_n\}$, such that $m_i K[Z_i]$ is a free $K[Z_i]$-module; $m_iK[Z_i]$ is
called a Stanley subspace of $M$. We call, the Stanley length of $\mathcal D$, the number $\slength(\mathcal D):=r$.
Also, we call the Stanley depth of $M$, the number $\sdepth(\mathcal D):=\min\limits_{i=1}^r |Z_i|$.
The Stanley depth of $M$ is
$$\sdepth(M)=\max\{\sdepth(\mathcal D)\;:\;\mathcal D\text{ is a Stanley decomposition of }M\}.$$
We define the Stanley length of $M$, the number
$$\slength(M)=\min\{\slength(\mathcal D)\;:\;\mathcal D\text{ is a Stanley decomposition of }M\}.$$
Herzog Vl\u adoiu and Zheng \cite{hvz} proved that $\sdepth(M)$ can be computed in a finite number of steps, 
when $M=I/J$, where $0\subset J\subsetneq I\subset S$ are monomial ideals.

We say that the multigraded module $M$ satisfies the Stanley inequality, if $$\sdepth(M)\geq \depth(M).$$
Stanley conjectured in \cite{stanley} that $\sdepth(M)\geq \depth(M)$,
for any \Zng $\;M$. In fact, in this form, the conjecture was stated by Apel in \cite{apel}.

Duval et al. \cite{duval} disproved this conjecture by showing a squarefree monomial ideal $I\subset S$ with
$\sdepth(S/I)<\depth(S/I)$; see \cite[Remark 3.7]{duval}. Also, they provided an example of two squarefree monomial ideals
$0\neq I \subsetneq J\subset S$ with $\sdepth(J/I)<\depth(J/I)$; see \cite[Remark 3.6]{duval}.
However, if the "Betti poset conjecture", proposed by K\"atthan in \cite{betipo}, holds, then 
$$\sdepth(I)\geq \depth(I)\text{ and }\sdepth(S/I)\geq \depth(S/I)-1,$$
for all monomial ideals $I\subset S$. 


Despite the fact that Stanley depth was extensively studied in the literature, in so far,
we don't know any paper which tackle the subject of the length of a Stanley 
decomposition of a multigraded module. We believe that the notion of the minimal length
of a Stanley decomposition, which we call it \emph{Stanley length}, could be an interesting 
measure of the combinatorial complexity of a multigraded $S$-module. The aim of our paper is 
to approach several questions regarding this new invariant.

In Proposition \ref{p1}, we show that, if
$0\to U \to M \to N \to 0$ is a short exact sequence of finitely generated multigraded $S$-modules, then 
$$\slength(M) \leq \slength(U) + \slength(N).$$
Given a monomial ideal $I\subset S$, we denote by $G(I)$, be the set of minimal monomial generators of $I$.
In Proposition \ref{p2}, we note that $\slength(I)\geq |G(I)|$.

In Proposition \ref{p4}, we prove that $\slength(I:v)\leq \slength(I)$ for any monomial ideal $I\subset S$ and any monomial $v\in S$.
Moreover, if $I=v(I:v)$, then $\slength(I:v)=\slength(I)$. 

In Proposition \ref{p5}, we prove that $\slength(I)=\slength(IS[x_{n+1}])$.
In Proposition \ref{p6}, we give some upper bounds for the Stanley length of $I\cap J$, $I+J$, $S/(I\cap J)$ and $S/(I+J)$,
 in terms of the Stanley lengths of $I$, $S/I$, $J$
and $S/J$. It is obvious that a monomial ideal $I=(u)\subset S$ is principal if and only if $\slength(I)=1$. In Proposition \ref{p7}, we 
show that if $I=(u)$ is principal, then $\slength(S/I)=\deg(u)$. We illustrate the previous results in Example \ref{exm1}.

In Theorem \ref{p28}, we give a precise formula for the Stanley length of a monomial ideal $I$ in two variables; see also Example \ref{exp28}.
Using a result from \cite{seyed}, in Theorem \ref{sey}, we give a 
criterion to compare the Stanley length of two quotients of monomial ideals in $S$. 
As a consequence, in Corollary \ref{cc1}, we prove that if
$0\subset I\subsetneq J\subset S$ are two monomial ideals and $v\in S$ is a monomial such that $(I:v)\subsetneq (J:v)$, then
$$\slength((J:v)/(I:v))\leq \slength(J/I).$$
In Corollary \ref{c2}, we prove that if $0\subset I\subsetneq J\subset S$ are two monomial ideals such 
that $\overline I\subsetneq \overline J$, where $\overline I$ and $\overline J$ are the integral closures of $I$ and $J$,
then  $$\slength(\overline J/\overline I)\leq \slength(\overline{J^k}/\overline{I^k}),\text{ for all }k\geq 1.$$
Using a method from \cite{ichim1}, in Theorem \ref{t1}, we prove that if $0\subset I\subsetneq J\subset S$ are two monomial ideals and $I^p\subset J^p$ 
are their polarizations, then $$\slength(J/I)\geq \slength(J^p/I^p).$$
Using an idea from \cite{ishaq}, in Theorem \ref{t2}, we prove that if $0\subset I\subsetneq J\subset S$ are two monomial ideals
then $$\slength(J/I)\geq\slength(\sqrt{J}/\sqrt{I}).$$
Using Janet's decomposition \cite{anwar}, in Theorem \ref{t3}, we give an upper bound for the Stanley length of a monomial $I
\subset S$, in terms of its minimal monomial generators. See also Example \ref{ext3}.

In Theorem \ref{t4} we prove that if $I$ is a monomial complete intersection ideal, generated by monomials of degrees $d_1,d_2,\ldots,d_m$, then 
$$\slength(I)\leq 1+d_1+d_1d_2+\cdots+d_1d_2\cdots d_{m-1}.$$ 
In Proposition \ref{p-trei}, we prove that if $I$ is monomial complete intersection ideal, minimally generated by three monomials of degrees $d_1, d_2$ and $d_3$, 
then we have 
$\slength(I)\leq d_1+d_2+d_3+1.$ See also Example \ref{ext4}.

Let $0\subset I\subsetneq J\subset S$ be two squarefree monomial ideals. A
Stanley decomposition $$\mathcal D:\;J/I=\bigoplus_{i=1}^r u_iK[Z_i],$$ is called squarefree, if $u_i\in S$ are squarefree monomials
and $\supp(u_i)\subset Z_i$, for all $1\leq i\leq r$. In Corollary \ref{c1},
we show that there exists a squarefree Stanley decomposition $\mathcal D$ of $J/I$ with $\slength(\mathcal D)=\slength(J/I)$.
In Proposition \ref{p33} and Proposition \ref{p4}, we give some upper bounds for $\slength(J/I)$. Also, in Proposition \ref{p35}, we give a natural lower
bound for $\slength(J/I)$. Using Theorem \ref{sey}, in Proposition \ref{p36},
 we prove that for every pair of integers $k,s\geq 1$, we have
$$ \slength(J^{(s)}/I^{(s)}) \leq \slength(J^{(ks)}/I^{(ks)}), $$
where $I^{(s)}$ and $J^{(s)}$ denote the $s$-th symbolic power of $I$ and $J$.

In Theorem \ref{t-doi}, we prove that if $I\subset S$ is a monomial ideal, minimally generated by two monomials $u_1,u_2\in S$, then:
$$\slength(I)=\min\{\deg(u_1),\deg(u_2)\}-\deg(\gcd(u_1,u_2))+1.$$
See also Example \ref{ext-doi}.

If Proposition \ref{3sq}, we prove that if $I\subset S$ is squarefree and minimally generated by three monomials, 
then $\slength(I)\leq n+1$. However, the problem to give a precise value for $\slength(I)$ seems very difficult, 
even if $I$ is also a complete intersection; see Remark \ref{trei}. 

We recall that a monomial ideal $I\subset S$ has linear quotients, if there exists $u_1 \leqslant u_2 \leqslant \cdots  \leqslant u_m$, an ordering  
on the minimal set of generators $G(I)$, such that, for any $2\leq j\leq m$, 
the ideal $(u_1,\ldots,u_{j-1}):u_j$ is generated by variables. For further details on this class of ideals, we refer the reader to \cite{jahz}.
It is easy to see that if $I$ has linear quotients, then $\slength(I)=m$. 
It is natural to ask if the converse assertion is true. In general, the answer is negative, but there are
some particular cases when the assertion holds.

Let $I_j=(u_1,u_2,\ldots,u_j)$ for all $1\leq j\leq m$.
In Theorem \ref{popo}, we prove that if $\slength(I_j)=j$, for all $1\leq j\leq m$, then $I$ has linear quotients, given by the
ordering $u_1 \leqslant u_2 \leqslant \cdots  \leqslant u_m$. In Proposition \ref{p41}, we prove that if $\slength(I)=|G(I)|\leq 3$,
then $I$ has linear quotients. Moreover, this is the best result possible, in the sense that there are monomial ideals with
$\slength(I)=|G(I)|=4$ which have no linear quotients; see Example \ref{ex1}.
Also, in Proposition \ref{p42}, we show that if $I$ is a monomial ideal in two variables with $\slength(I)=|G(I)|$, then $I$ has
linear quotients.

In Remark \ref{o44}, we note the fact that a Stanley decomposition of minimal length is not necessarily of maximal depth. 
Finally, in Theorem \ref{t5}, we give an upper bound for the Stanley length of a
squarefree monomial complete intersection, which depends on the number of minimal monomial generators. See also Example \ref{ext5}.


\section{Main results}

First, we prove the following result:

\begin{prop}\label{p1}
Let $0\to U \to M \to N \to 0$ be a short exact sequence of finitely generated multigraded $S$-modules. Then
$$\slength(M) \leq \slength(U) + \slength(N).$$
\end{prop}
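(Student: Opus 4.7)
The plan is to build a Stanley decomposition of $M$ by combining one from $U$ with a lifted version of one from $N$. Fix Stanley decompositions
\[
U = \bigoplus_{i=1}^{r} u_i K[Y_i], \qquad N = \bigoplus_{j=1}^{s} \bar n_j K[Z_j]
\]
of minimal length $r=\slength(U)$ and $s=\slength(N)$. Identifying $U$ with its image in $M$ and writing $\pi\colon M\twoheadrightarrow N$ for the projection, I would, for each $j$, choose a homogeneous lift $n_j\in M$ of $\bar n_j$ of the same multidegree (possible because $\pi$ is surjective in each multigraded component). The claim to prove is then
\[
M \;=\; \bigoplus_{i=1}^{r} u_i K[Y_i] \;\oplus\; \bigoplus_{j=1}^{s} n_j K[Z_j],
\]
as a direct sum of $K$-vector spaces, with each $n_j K[Z_j]$ free over $K[Z_j]$; this yields a Stanley decomposition of $M$ of length $r+s$.

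The verification proceeds in three short steps, all driven by the fact that $\pi$ sends $n_j f$ to $\bar n_j f$ for $f\in K[Z_j]$. First, freeness of $n_j K[Z_j]$: if $n_j f=0$ in $M$, then $\bar n_j f=0$ in $N$, and since $\bar n_j K[Z_j]$ is free we get $f=0$. Second, the sum $\sum_j n_j K[Z_j]$ is direct in $M$: if $\sum_j n_j f_j=0$, applying $\pi$ and using the direct sum in $N$ forces each $f_j=0$. Third, $U\cap \sum_j n_j K[Z_j]=0$: an element $\sum_j n_j f_j\in U$ maps to $0$ under $\pi$, so by the same direct-sum argument in $N$ each $f_j=0$.

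Finally, surjectivity: given any homogeneous $m\in M$, decompose $\pi(m)=\sum_j \bar n_j f_j$ in $N$; then $m-\sum_j n_j f_j \in \ker\pi = U$, so $m$ is the sum of an element of $U$ and an element of $\sum_j n_j K[Z_j]$. Combined with the Stanley decomposition of $U$, this produces the advertised decomposition of $M$, and consequently $\slength(M)\le r+s=\slength(U)+\slength(N)$.

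I do not anticipate a serious obstacle here; the one place to be careful is the multigraded setting, namely that lifts $n_j$ can be chosen homogeneous of the correct multidegree and that the resulting decomposition is a decomposition of multigraded $K$-vector spaces. Both points follow from the fact that the short exact sequence is one of multigraded modules, so $\pi$ is a surjection in every multidegree.
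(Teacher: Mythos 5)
Your argument is correct and is precisely the construction the paper relies on: its proof of Proposition \ref{p1} simply cites \cite[Lemma 2.2]{asia}, whose content is the same lift-and-combine decomposition $M=\bigoplus_i u_iK[Y_i]\oplus\bigoplus_j n_jK[Z_j]$ that you verify in detail. No further comment is needed.
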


\begin{proof}
As in the proof of \cite[Lemma 2.2]{asia}, given a Stanley decomposition of $U$ of length 
$r$ and a Stanley decomposition of $N$ of length $s$,
one can obtain a Stanley decomposition of $M$ of length $r+s$.
\end{proof}

\begin{prop}\label{p2}
For any monomial ideal $I\subset S$, we have that
$$\slength(I)\geq |G(I)|.$$
Moreover, $I$ is principal if and only if $\slength(I)=1$.
\end{prop}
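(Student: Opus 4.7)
The plan is to fix an optimal Stanley decomposition $\mathcal{D}: I = \bigoplus_{i=1}^{r} m_i K[Z_i]$ with $r = \slength(I)$, and produce an injection from $G(I)$ into $\{1,\ldots,r\}$.

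First, since $I$ is a monomial ideal and the decomposition is $\mathbb{Z}^n$-graded, we may assume each $m_i$ is a monomial of $I$. Because $m_i \in I$, there is a minimal generator $w_i \in G(I)$ with $w_i \mid m_i$. Conversely, pick any $u \in G(I)$; then $u \in I$, so $u$ lies in exactly one Stanley subspace $m_i K[Z_i]$, which forces $m_i \mid u$. Chaining these divisibilities yields $w_i \mid m_i \mid u$, and minimality of $u$ in $G(I)$ forces $w_i = u$, whence $m_i = u$.

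This assigns to each $u \in G(I)$ an index $i(u) \in \{1,\ldots,r\}$ with $m_{i(u)} = u$. The assignment is injective: if $i(u) = i(u')$ then $u = m_{i(u)} = m_{i(u')} = u'$, so $|G(I)| \le r = \slength(I)$.

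For the second assertion, if $I = (u)$ is principal, then $I = u K[x_1,\ldots,x_n]$ is a Stanley decomposition of length one, so $\slength(I) = 1$. Conversely, if $\slength(I) = 1$, the inequality just proved gives $|G(I)| \le 1$, so $I$ is principal. The main (minor) subtlety is justifying that each $m_i$ in a minimal decomposition can be taken to be a monomial rather than a general multihomogeneous element; this follows from the fact that multihomogeneous elements of a monomial ideal are scalar multiples of monomials, so the argument is really routine.
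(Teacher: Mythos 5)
Your proof is correct and follows essentially the same route as the paper's: the key observation in both is that each minimal generator $u\in G(I)$ must lie in a Stanley space of the form $uK[Z]$ (you spell out the divisibility chain $w_i\mid m_i\mid u$ that the paper leaves implicit), which injects $G(I)$ into the set of summands. The characterization of principal ideals then follows in both arguments exactly as you state it.
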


\begin{proof}
Assume that $G(I)=\{u_1,\ldots,u_m\}$. Note that any Stanley decomposition of $I$ is of the form
\begin{equation}\label{decy}
\mathcal D: I=u_1 K[Z_1]\oplus u_2 K[Z_2] \oplus \cdots \oplus u_r K[Z_r],
\end{equation}
where $r\geq m$. This follows from the fact that a minimal monomial generator $u_i\in G(I)$ must belong to a 
Stanley space of the form $u_iK[Z_i]$. 
Moreover, there exists a unique $1\leq j\leq r$, such that $Z_j=\{x_1,\ldots,x_n\}$.

From the above, we easily get the required conclusions.
\end{proof}

\begin{prop}\label{p3}
Let $I\subset S$ be a monomial ideal and $u\in S$ a monomial. Then
$$\slength(uI)=\slength(I).$$
\end{prop}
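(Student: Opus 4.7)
The plan is to show that multiplication by $u$ induces a length-preserving bijection between Stanley decompositions of $I$ and Stanley decompositions of $uI$, which immediately yields the equality of Stanley lengths.

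First I would observe that the $S$-linear map $\varphi : I \to uI$ defined by $\varphi(f) = uf$ is an isomorphism of finitely generated multigraded $S$-modules (up to a shift of multidegree). Injectivity follows because $S$ is a domain, and surjectivity is clear since every element of $uI$ is of the form $uf$ with $f \in I$. Because $\varphi$ is $S$-linear, it carries a Stanley subspace $m_i K[Z_i] \subset I$ to $um_i K[Z_i] \subset uI$; the latter is again a free $K[Z_i]$-module generated by the monomial $um_i$. Hence a Stanley decomposition
$$I = \bigoplus_{i=1}^{r} m_i K[Z_i] \quad\text{yields}\quad uI = \bigoplus_{i=1}^{r} (um_i) K[Z_i],$$
of the same length $r$. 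This gives the inequality $\slength(uI) \leq \slength(I)$.

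For the reverse inequality, the key point is that every monomial lying in $uI$ is divisible by $u$, since the minimal monomial generators of $uI$ are exactly $u \cdot G(I)$. Given a Stanley decomposition $uI = \bigoplus_{j=1}^{s} n_j K[Y_j]$, each generator $n_j$ is a monomial in $uI$, so we may write $n_j = u m_j$ with $m_j$ a monomial in $I$. Applying $\varphi^{-1}$ (i.e., dividing by $u$, which is legal on monomials in $uI$ and respects $K$-linear direct sums because $\varphi$ is an isomorphism) produces the decomposition
$$I = \bigoplus_{j=1}^{s} m_j K[Y_j],$$
again of length $s$. Hence $\slength(I) \leq \slength(uI)$, and the two are equal.

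There is no real obstacle here; the only thing one must be careful about is the bookkeeping for the reverse direction, namely verifying that the divisibility of each Stanley monomial $n_j$ by $u$ is automatic (so that $m_j = n_j / u$ is a genuine monomial in $I$) and that the resulting direct sum actually exhausts $I$ with trivial intersections, which follows from $\varphi$ being a $K$-vector space isomorphism.
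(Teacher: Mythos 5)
Your argument is correct and is essentially the paper's own proof: multiply a Stanley decomposition of $I$ by $u$ to get one of $uI$, and divide a Stanley decomposition of $uI$ by $u$ to get one of $I$, each direction preserving length. The extra care you take in justifying that each Stanley monomial of $uI$ is divisible by $u$ is a welcome (if routine) addition, but the route is the same.
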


\begin{proof}
If $\mathcal D\;:\;I=u_1K[Z_1]\oplus \cdots \oplus u_rK[Z_r]$ is a Stanley decomposition of $I$, 
then $uI=uu_1K[Z_1]\oplus \cdots \oplus uu_rK[Z_r]$ is
a Stanley decomposition of $uI$. Hence $$\slength(uI)\leq \slength(I).$$
Conversely, if $uI=v_1K[W_1]\oplus \cdots \oplus v_pK[W_p]$ is Stanley decomposition of $uI$, then 
$I=(v_1/u)K[W_1]\oplus \cdots \oplus(v_p/u)K[W_p]$ is a Stanley decomposition of $I$.
Hence $$\slength(I)\leq \slength(uI).$$
Thus, we completed the proof.
\end{proof}

\begin{prop}\label{p4}
Let $I\subset S$ be a monomial ideal and $v\in S$ be a monomial. Then 
$$\slength(I:v)\leq \slength(I).$$
The equality holds if $I=v(I:v)$.
\end{prop}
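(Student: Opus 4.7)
The plan is to prove the inequality $\slength(I:v)\leq \slength(I)$ by taking a Stanley decomposition $\mathcal D\colon I = \bigoplus_{i=1}^r u_i K[Z_i]$ realizing $\slength(I)=r$, and explicitly transforming it into a Stanley decomposition of $I:v$ whose length is at most $r$. The equality case will then reduce to Proposition \ref{p3}.

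For the main inequality, I would first recall that $v\cdot(I:v) = I\cap(v)$, so $I:v$ corresponds (bijectively, via division by $v$) to the part of $I$ that is divisible by $v$. For each Stanley subspace $u_i K[Z_i]$, I will compute $u_i K[Z_i]\cap (v)$ explicitly. Write $v=\prod_j x_j^{a_j}$ and $u_i=\prod_j x_j^{b_j}$. A monomial $u_i m$ with $m\in K[Z_i]$ lies in $(v)$ iff $a_j\leq b_j$ for every $j\notin Z_i$ (the ``exterior compatibility'' condition), since the exponents in $m$ for variables outside $Z_i$ are zero. When this condition fails, $u_i K[Z_i]\cap(v) = 0$. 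When it holds, I set
$$u'_i \;=\; u_i\cdot \prod_{j\in Z_i} x_j^{\max(0,\,a_j-b_j)},$$
which is the $\lcm(u_i,v)$ restricted to $K[Z_i]$, and check that $u_i K[Z_i]\cap(v) = u'_i K[Z_i]$ and that $v \mid u'_i$.

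Next, I would verify that the candidate decomposition
$$(I:v) \;=\; \bigoplus_{i\,:\,\text{compatible}} (u'_i/v)\,K[Z_i]$$
is indeed a Stanley decomposition. Every monomial $w\in I{:}v$ satisfies $vw\in I$, so $vw$ lies in exactly one $u_i K[Z_i]$; by comparing exponents outside $Z_i$ one sees the compatibility condition must hold for this $i$, and then $w=(u'_i/v)\cdot(u_i m/u'_i)\in (u'_i/v)K[Z_i]$, proving the sum covers $I:v$. Directness follows from the directness of $\mathcal D$, because the map $w\mapsto vw$ is injective on monomials. Since we use at most one summand per $i$, the resulting length is $\leq r = \slength(I)$.

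For the equality assertion, if $I=v(I:v)$, Proposition \ref{p3} (applied to the ideal $I:v$ multiplied by the monomial $v$) gives $\slength(I) = \slength(v(I:v)) = \slength(I:v)$, combined with the already proved inequality. The main obstacle is simply the bookkeeping with exponents in the explicit construction of $u'_i$ and the verification that the resulting family is a direct sum covering $I:v$; no technical difficulty arises beyond checking these divisibility conditions variable by variable.
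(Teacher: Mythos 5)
Your proposal is correct and follows essentially the same strategy as the paper: push a minimal Stanley decomposition of $I$ forward to one of $I:v$ by intersecting each Stanley space with $(v)$ and dividing by $v$, then settle the equality case via Proposition \ref{p3}. The only difference is one of packaging: the paper reduces by recurrence to the case where $v$ is a single variable and cites Popescu's explicit formula, whereas you treat a general monomial $v$ in one step; your formula for $u'_i$ specializes exactly to the paper's decomposition when $v=x_n$.
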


\begin{proof}
If $v\in I$ then $(I:v)=S$ and there is nothing to prove. Assume that $v\notin I$. 
By recurrence it is enough to consider the case when $v$ is a variable, let us say $v=x_n$.
From the proof of \cite[Proposition 2]{pop}, given $\mathcal D\;:\;I=u_1K[Z_1]\oplus \cdots \oplus u_rK[Z_r]$ a Stanley decomposition
of $I$, it follows that
$$\mathcal D'\;:\;(I:v) = \bigoplus_{x_n\mid u_i}(u_i/x_n) K[Z_i] \oplus \bigoplus_{x_n \nmid u_i,\;x_n\in Z_i} u_i K[Z_i]$$
is a Stanley decomposition of $(I:v)$. Obviously, $\slength(\mathcal D')\leq \slength(\mathcal D)$. 

The last assertion follows from Proposition \ref{p3}.
\end{proof}

Similarly to \cite[Lemma 3.6]{hvz}, we have the following result: 

\begin{prop}\label{p5}
Let $I\subset S$ be a monomial ideal.
Then $$\slength(IS[x_{n+1}])=\slength(I).$$
\end{prop}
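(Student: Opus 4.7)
The plan is to prove equality by two opposite inequalities, both obtained by explicitly manipulating Stanley decompositions.

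For the inequality $\slength(IS[x_{n+1}])\leq \slength(I)$, I would start from any Stanley decomposition $\mathcal D:I=\bigoplus_{i=1}^{r}u_iK[Z_i]$ of length $r=\slength(I)$, and produce
$$\mathcal D':\ IS[x_{n+1}]=\bigoplus_{i=1}^{r}u_iK[Z_i\cup\{x_{n+1}\}].$$
To check that $\mathcal D'$ is a legitimate Stanley decomposition I would observe that every monomial in $IS[x_{n+1}]$ has a unique factorization $w\cdot x_{n+1}^{k}$ with $w\in I$ and $k\geq 0$; since $w$ lies in a unique summand $u_iK[Z_i]$ of $\mathcal D$, the monomial $w\cdot x_{n+1}^k$ lies in the unique summand $u_iK[Z_i\cup\{x_{n+1}\}]$ of $\mathcal D'$. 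So $\slength(\mathcal D')=r$.

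For the reverse inequality $\slength(I)\leq\slength(IS[x_{n+1}])$, I would start from a Stanley decomposition $\mathcal D':IS[x_{n+1}]=\bigoplus_{i=1}^{s}v_iK[W_i]$ of minimal length $s$ and show how to extract a Stanley decomposition of $I$ whose length does not exceed $s$. For each index $i$, I would do the following case analysis on whether the variable $x_{n+1}$ appears in the generator $v_i$ or in the set $W_i$:
\begin{itemize}
\item if $x_{n+1}\mid v_i$, discard this summand;
\item if $x_{n+1}\nmid v_i$ and $x_{n+1}\notin W_i$, keep $v_iK[W_i]$ unchanged;
\item if $x_{n+1}\nmid v_i$ and $x_{n+1}\in W_i$, replace the summand by $v_iK[W_i\setminus\{x_{n+1}\}]$.
\end{itemize}
Call the resulting collection $\mathcal D$. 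Each remaining piece is a bona fide Stanley space in $S$. To verify that they decompose $I$, I would take any monomial $w\in I\subset S$; as $w\in IS[x_{n+1}]$, it lies in a unique summand $v_iK[W_i]$ of $\mathcal D'$, and because $x_{n+1}\nmid w$ we must have $x_{n+1}\nmid v_i$ and the factor $w/v_i$ must use only variables from $W_i\setminus\{x_{n+1}\}$; hence $w$ lies in exactly one summand of $\mathcal D$. Thus $\slength(I)\leq \slength(\mathcal D)\leq s$.

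Combining the two inequalities yields the equality. There is no genuine obstacle: the argument is a direct translation of the standard trick used for the Stanley depth (as in \cite[Lemma 3.6]{hvz}), the only point that requires care being the bookkeeping in the three-case analysis that ensures each monomial of $I$ is covered exactly once after restriction.
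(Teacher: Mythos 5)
Your argument is correct and follows essentially the same route as the paper: adjoin $x_{n+1}$ to each $Z_i$ for one inequality, and intersect each summand of a decomposition of $IS[x_{n+1}]$ with $S$ (discarding those with $x_{n+1}\mid v_i$, deleting $x_{n+1}$ from $W_i$ otherwise) for the other. Your three-case split in the reverse direction collapses to the paper's two cases, since removing $x_{n+1}$ from a set not containing it changes nothing.
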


\begin{proof}
If $\mathcal D\;:\;I=u_1K[Z_1]\oplus \cdots \oplus u_rK[Z_r]$ is a Stanley decomposition of $I$, then
$$\mathcal D'\;:\;IS[x_{n+1}]=u_1K[Z'_1]\oplus \cdots \oplus u_rK[Z'_r],$$ 
is a Stanley decomposition of $IS[x_{n+1}]$,
where $Z'_i=Z_i\cup\{x_{n+1}\}$ for $1\leq i\leq r$. Hence, $$\slength(IS[x_{n+1}])\leq \slength(I).$$

Conversely, let $\mathcal D'\;:\;IS[x_{n+1}]=v_1K[W_1]\oplus \cdots \oplus v_pK[W_p]$ be a Stanley decomposition of $IS[x_{n+1}]$.
Note that $$v_iK[W_i]\cap S=\begin{cases} \{0\},\; x_{n+1}\mid v_i \\ v_iK[W_i\setminus\{x_{n+1}\}],\;x_{n+1}\nmid v_i \end{cases}\text{ for all }1\leq i\leq p.$$
Hence, $\mathcal D\;:\;I= \bigoplus_{x_{n+1}\nmid v_i} v_iK[W_i\setminus\{x_{n+1}\}]$ is a Stanley decomposition for $I$ with $\slength(\mathcal D)\leq
\slength(\mathcal D')$. Hence, $$\slength(IS[x_{n+1}])\geq \slength(I),$$ as required.
\end{proof}

\begin{prop}\label{p6}
Let $I,J\subset S$ be two monomial ideals with $r=\slength(I)$, $\bar r=\slength(S/I)$, $s=\slength(J)$
and $\bar s=\slength(S/J)$. Then:
\begin{enumerate}
\item[(1)] $\slength(I\cap J)\leq rs$.
\item[(2)] $\slength(I+J)\leq \min\{ r + s\bar r,\;  s + r\bar s \}$.
\item[(3)] $\slength(S/(I+J))\leq \bar r\bar s$.
\item[(4)] $\slength(S/(I\cap J))\leq \min\{ \bar r + r\bar s,\;  \bar s + s\bar r \}$.
\end{enumerate}
\end{prop}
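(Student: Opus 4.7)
All four bounds rest on a single combinatorial observation: given two Stanley subspaces $uK[Z]$ and $vK[W]$ of $S$, their intersection (as $K$-vector subspaces, equivalently as sets of monomials) is either empty, or equals the single Stanley subspace $\lcm(u,v)\,K[Z\cap W]$. The verification is an exponent-by-exponent check: writing $u=\prod x_k^{a_k}$ and $v=\prod x_k^{b_k}$, the monomial $\prod x_k^{c_k}$ lies in $uK[Z]$ iff $c_k\ge a_k$ for all $k$ with equality when $x_k\notin Z$, and similarly for $vK[W]$; consistency of the two systems forces $a_k=b_k$ off $Z\cup W$, $c_k=b_k\ge a_k$ on $Z\setminus W$, $c_k=a_k\ge b_k$ on $W\setminus Z$, and $c_k\ge\max(a_k,b_k)$ on $Z\cap W$, which is exactly $\lcm(u,v)\,K[Z\cap W]$.

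The plan is then to fix minimal-length Stanley decompositions
\[
I=\bigoplus_{i=1}^r u_iK[Z_i],\ \ J=\bigoplus_{j=1}^s v_jK[W_j],\ \ S/I=\bigoplus_{k=1}^{\bar r}w_kK[V_k],\ \ S/J=\bigoplus_{l=1}^{\bar s}z_lK[U_l],
\]
and to decompose each of the four modules in the statement as a disjoint union of monomial sets, then apply the observation. For (1), every monomial of $I\cap J$ lies in a unique subspace of each of the first two decompositions, so $I\cap J=\bigoplus_{i,j}(u_iK[Z_i]\cap v_jK[W_j])$ is a Stanley decomposition of length $\le rs$. Part (3) is formally identical, using instead the decompositions of $S/I$ and $S/J$: the monomials representing $S/(I+J)$ are exactly those lying neither in $I$ nor in $J$, so $S/(I+J)=\bigoplus_{k,l}(w_kK[V_k]\cap z_lK[U_l])$ has length $\le\bar r\bar s$.

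For (2), I would write $I+J=I\sqcup(J\setminus I)$ as a disjoint union of monomial sets. Since the monomials of $J\setminus I$ are those of $J$ that also belong to the set $S\setminus I$ represented by the decomposition of $S/I$, intersecting $\mathcal D_J$ pairwise with $\mathcal D_{S/I}$ yields a Stanley decomposition of $J\setminus I$ of length $\le s\bar r$. Adjoining $\mathcal D_I$ produces a Stanley decomposition of $I+J$ of length $\le r+s\bar r$, and swapping the roles of $I$ and $J$ gives $\le s+r\bar s$. Part (4) is entirely symmetric: the monomials representing $S/(I\cap J)$ form the disjoint union $(S\setminus I)\sqcup(I\setminus J)$, and the piece $I\setminus J$ decomposes into at most $r\bar s$ Stanley subspaces by intersecting $\mathcal D_I$ with $\mathcal D_{S/J}$; adjoining $\mathcal D_{S/I}$ yields length $\le\bar r+r\bar s$, with the other bound by symmetry. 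One could alternatively derive (2) and (4) by applying Proposition \ref{p1} to the short exact sequences $0\to I\to I+J\to J/(I\cap J)\to 0$ and $0\to I/(I\cap J)\to S/(I\cap J)\to S/I\to 0$, after bounding $\slength(J/(I\cap J))\le s\bar r$ and $\slength(I/(I\cap J))\le r\bar s$ by the same intersection trick.

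The only substantive step is the opening intersection-of-Stanley-subspaces observation; everything else is bookkeeping about disjoint monomial sets. There is no serious obstacle, only the need to check that each pairwise intersection is itself a Stanley subspace (or zero), which the calculation above confirms.
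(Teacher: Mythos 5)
Your proposal is correct and follows essentially the same route as the paper: the paper's proof simply defers to the proof of Theorem 2.2 in the cited reference [sir], whose mechanism is exactly your observation that the intersection of two Stanley subspaces is either zero or the Stanley subspace $\lcm(u,v)K[Z\cap W]$, applied pairwise to the decompositions of $I$, $J$, $S/I$, $S/J$, together with the splittings $I+J=I\oplus(J\cap(S/I))$ and $S/(I\cap J)=(S/I)\oplus(I\cap(S/J))$ that you also use. You have merely written out in full the details the paper outsources to that reference.
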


\begin{proof}
(1) According to the proof of \cite[Theorem 2.2(1)]{sir}, given a Stanley decomposition of $I$ of length $r$
and a Stanley decomposition of $J$ of length $s$, one can produce a Stanley decomposition of $I\cap J$ of length 
less or equal to $rs$.

(2) As in the proof of \cite[Theorem 2.2(3)]{sir}, we can write 
\begin{equation}\label{ipj}
I+J=I\oplus (J\cap(S/I)).
\end{equation}
Moreover, given a Stanley decomposition of $I$ of length $r$
and a Stanley decomposition of $J$ of length $s$, one can produce a Stanley decomposition of $J\cap(S/I)$ of length
less or equal to $rs$. Since the roles of $I$ and $J$ could be reversed, the conclusion follows from \eqref{ipj}.

(3) The proof is similar to the proof of (1).

(4) The proof is similar to the proof of (2).
\end{proof}


\begin{prop}\label{p7}
Let $u\in S$ be a monomial of degree $d \geq 1$. 
Then $$\slength(S/(u))=d.$$
\end{prop}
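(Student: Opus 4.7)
The plan is to prove the inequalities $\slength(S/(u)) \leq d$ and $\slength(S/(u)) \geq d$ separately. For the upper bound I would write $u = x_1^{a_1} \cdots x_n^{a_n}$ with $d = a_1 + \cdots + a_n$ and exhibit an explicit Stanley decomposition. Every monomial $x_1^{b_1} \cdots x_n^{b_n} \notin (u)$ has a well-defined smallest index $i$ with $b_i < a_i$; grouping such monomials by this index $i$ and by the value $k = b_i$ yields
\[
S/(u) \;=\; \bigoplus_{i=1}^{n}\;\bigoplus_{k=0}^{a_i - 1}\, x_1^{a_1} \cdots x_{i-1}^{a_{i-1}} x_i^{k}\cdot K\bigl[\{x_1, \ldots, x_n\} \setminus \{x_i\}\bigr],
\]
a Stanley decomposition of length $a_1 + \cdots + a_n = d$ (with the convention that the inner sum is empty when $a_i = 0$).

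For the lower bound I would argue via the multigraded Hilbert series. A preliminary remark is that in any Stanley decomposition $\mathcal D : S/(u) = \bigoplus_{i=1}^{r} m_i K[Z_i]$, each $Z_i$ must be a proper subset of $\{x_1, \ldots, x_n\}$: if $Z_i = \{x_1, \ldots, x_n\}$, then $m_i K[Z_i] = m_i S$ would be $S$-free of rank one, contradicting $u \cdot m_i = 0$ in $S/(u)$. Specializing all variables of the $\mathbb{Z}^n$-graded Hilbert series to a single variable $t$ and clearing the denominator $(1-t)^n$ then yields the polynomial identity
\[
1 - t^d \;=\; \sum_{i=1}^{r} t^{\deg m_i}\,(1-t)^{n - |Z_i|},
\]
in which every exponent $n - |Z_i|$ is at least $1$.

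The decisive step is to expand both sides in powers of $(1-t)$ and to compare the coefficient of $(1-t)^1$. The left-hand side contributes $d$, coming from $1 - t^d = d(1-t) - \binom{d}{2}(1-t)^2 + \cdots$; on the right-hand side, only summands with $n - |Z_i| = 1$ contribute a term of order $(1-t)^1$, and each such summand contributes exactly $1$, since summands with $n - |Z_i| \geq 2$ start at $(1-t)^2$. Consequently
\[
d \;=\; \bigl|\{\, i : |Z_i| = n - 1 \,\}\bigr| \;\leq\; r,
\]
which gives $\slength(S/(u)) \geq d$. The main obstacle is precisely this lower bound: the staircase decomposition is routine once one sees it, but forcing every Stanley decomposition of $S/(u)$ to use at least $d$ pieces requires a global invariant, and the Hilbert series comparison above seems to be the cleanest way to rule out hypothetical decompositions with fewer summands.
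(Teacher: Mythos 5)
Your proof is correct. The upper bound is exactly the paper's: the same staircase decomposition $\bigoplus_{j}\bigoplus_{k=0}^{a_j-1} x_1^{a_1}\cdots x_{j-1}^{a_{j-1}}x_j^{k}K[\{x_1,\ldots,x_n\}\setminus\{x_j\}]$. The lower bound, however, is a genuinely different argument. The paper argues combinatorially: after noting $|Z_i|\leq n-1$ (same observation as yours), it shows that for every $j$ and every $0\leq \ell\leq a_j-1$ there is a \emph{unique} index $i$ with $Z_i=\{x_1,\ldots,x_n\}\setminus\{x_j\}$ and $\deg_{x_j}(u_i)=\ell$ --- essentially by tracking where the monomials $x_j^{\ell}\prod_{k\neq j}x_k^{N_k}$ with $N_k\gg 0$ must land --- and then counts these $a_1+\cdots+a_n=d$ distinct summands. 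You instead pass to the coarsely graded Hilbert series, clear the denominator to get the polynomial identity $1-t^d=\sum_i t^{\deg m_i}(1-t)^{n-|Z_i|}$ with all exponents $n-|Z_i|\geq 1$, and compare the coefficient of $(1-t)^1$; this is valid (both sides are polynomials, so the expansion at $t=1$ is legitimate) and yields the sharper statement that \emph{every} Stanley decomposition of $S/(u)$ has exactly $d$ summands with $|Z_i|=n-1$, hence $r\geq d$. The two arguments extract the same information --- the paper's counting identifies concretely which Stanley spaces are forced to appear, which is useful for constructive purposes, while your Hilbert-series comparison is shorter, avoids the uniqueness verification, and is a reusable global invariant for lower-bounding Stanley length in other situations.
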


\begin{proof}
Assume that $u=x_1^{a_1}x_2^{a_2}\cdots x_n^{a_n}$. It is easy to check that 
\begin{align*}
\mathcal D\;:\;S/(u)=& \bigoplus_{i_1=0}^{a_1-1} x_1^{i_1}K[x_2,x_3,\ldots,x_n] \oplus \bigoplus_{i_2=0}^{a_2-1} x_1^{a_1}x_2^{i_2}
K[x_1,x_3,x_4,\ldots,x_n]\oplus \cdots \\
& \cdots \oplus \bigoplus_{i_n=0}^{a_n-1} x_1^{a_1}\cdots x_{n-1}^{a_{n-1}}x_n^{i_n}K[x_1,x_2,\ldots,x_{n-1}],
\end{align*}
is a Stanley decomposition of $S/(u)$ of length $\deg(u)$. Hence, $\slength(S/(u))\leq \deg(u)$.

In order to prove the converse inequality, let $\mathcal D\;:\;S/(u)=\bigoplus_{i=1}^r u_i K[Z_i]$ be a Stanley decomposition of $S/(u)$.
Note that $|Z_i|\leq n-1$ for all $1\leq i\leq r$. Moreover, 
for every $1\leq j\leq n$ and $0\leq \ell\leq a_j-1$, there exists a unique index $i=i(j,\ell)\in \{1,2,\ldots,r\}$, such that
$Z_i=\{x_1,\ldots,x_n\}\setminus\{x_j\}$ and $\deg_{x_j}(u_i)=\ell$. This shows that $r\geq a_1+\cdots+x_n=\deg(u)$, as required.
\end{proof}

\begin{exm}\label{exm1}\rm
(1) Let $1\leq m\leq n$ and $P=(x_1,\ldots,x_m)\subset S=K[x_1,\ldots,x_n]$, a monomial prime ideal. Then
    $$\mathcal D\;:\;I=x_1K[x_1,\ldots,x_n]\oplus x_2K[x_2,\ldots,x_n]\oplus \cdots \oplus x_mK[x_m,\ldots,x_n],$$
		is a Stanley decomposition of $I$. Since $|G(I)|=m$, from Proposition \ref{p1}, it follows that $\slength(I)=m$.
		
(2) Let $2\leq t\leq n$ and  $1\leq n_1 < n_2 < \cdots <n_t = n$ be some integers, and consider the ideal
$$I=(x_1,x_2,\ldots,x_{n_1})\cap (x_{n_1+1},x_{n_1+2},\ldots,x_{n_2})\cap \cdots \cap (x_{n_{t-1}+1},x_{n_{t-1}+2},\ldots,x_n)\subset S.$$
First, note that $I$ is minimally generated by $n_1(n_2-n_1+1)\cdots (n_t-n_{t-1}+1)$ monomials. On the other hand, using induction on $t\geq 2$, from Proposition \ref{p6}(1), it follows that $\slength(I)\leq n_1(n_2-n_1+1)\cdots (n_t-n_{t-1}+1)$. 
Hence, from Proposition \ref{p1}, it follows that
$$\slength(I)=n_1(n_2-n_1+1)\cdots (n_t-n_{t-1}+1).$$
\end{exm}


\begin{teor}\label{p28}
Let $I\subset K[x_1,x_2]$ be a monomial ideal with
$$G(I)=\{x_1^{a_1}x_2^{b_1},\;x_1^{a_2}x_2^{b_2},\ldots,x_1^{a_m}x_2^{b_m}\},$$
where $a_1>a_2>\cdots>a_m\geq 0$ and $0\leq b_1<b_2<\cdots<b_m$. Then
$$\slength(I)=\min\{a_i+b_i-a_m-b_1\;:\;1\leq i\leq m \}+1.$$
\end{teor}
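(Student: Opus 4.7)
The plan is to prove both bounds by a direct geometric analysis, since monomials in $I$ correspond to lattice points in $\mathbb{N}^2$ lying above the staircase cut out by the generators $u_i=x_1^{a_i}x_2^{b_i}$. For convenience, set $\alpha(q)=\min\{a_i\;:\;b_i\leq q\}$ for $q\geq b_1$ and $\beta(p)=\min\{b_i\;:\;a_i\leq p\}$ for $p\geq a_m$, so that the non-empty rows and columns of $I$ are the rays $\{(p,q_0)\;:\;p\geq \alpha(q_0)\}$ and $\{(p_0,q)\;:\;q\geq \beta(p_0)\}$.

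For the upper bound, I would fix an index $i_0$ attaining $\min_i(a_i+b_i)$ and exhibit the explicit decomposition
$$I \;=\; u_{i_0}K[x_1,x_2] \;\oplus\; \bigoplus_{q=b_1}^{b_{i_0}-1} x_1^{\alpha(q)}x_2^{q}K[x_1] \;\oplus\; \bigoplus_{p=a_m}^{a_{i_0}-1} x_1^{p}x_2^{\beta(p)}K[x_2].$$
A routine check (splitting $I$ according to whether $q<b_{i_0}$, or $q\geq b_{i_0}$ together with $p<a_{i_0}$, or both $\geq$) shows these Stanley spaces are pairwise disjoint and their union is $I$. The total length equals $1+(b_{i_0}-b_1)+(a_{i_0}-a_m) = \min_i(a_i+b_i)-a_m-b_1+1$.

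For the lower bound, let $\mathcal D\;:\;I=\bigoplus_{j=1}^r u_j'K[Z_j]$ be any Stanley decomposition. I would first argue that exactly one Stanley space has $|Z_j|=2$: at least one is needed to cover the two-dimensional tail $u_1 K[x_1,x_2]\subset I$ (a finite union of rays and points cannot contain it), while two such spaces $v_1 K[x_1,x_2]$ and $v_2 K[x_1,x_2]$ always meet at $\operatorname{lcm}(v_1,v_2)$, contradicting directness. Writing the unique two-dimensional piece as $v K[x_1,x_2]$ with $v=x_1^c x_2^d$, membership $v\in I$ forces $c+d\geq \min_i(a_i+b_i)$, and also $c\geq a_m$, $d\geq b_1$.

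The remaining step, which is the main bookkeeping obstacle, is to count the one-dimensional pieces. For each $q_0$ with $b_1\leq q_0<d$ the non-empty row of $I$ at height $q_0$ is disjoint from $v K[x_1,x_2]$, so its infinite tail can only be covered by some Stanley space of the form $x_1^{a}x_2^{q_0}K[x_1]$; in particular each such $q_0$ demands its own $K[x_1]$-type Stanley space, giving at least $d-b_1$ of them. Symmetrically, each $p_0$ with $a_m\leq p_0<c$ forces a distinct $K[x_2]$-type Stanley space, contributing $c-a_m$ more. Therefore
$$\slength(\mathcal D)\;\geq\;1+(d-b_1)+(c-a_m)\;=\;1+(c+d)-a_m-b_1\;\geq\;\min_{1\leq i\leq m}(a_i+b_i)-a_m-b_1+1,$$
which matches the upper bound and completes the proof.
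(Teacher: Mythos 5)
Your proof is correct and follows essentially the same strategy as the paper's: the upper bound via the explicit decomposition into one quadrant at a minimizing generator plus the rows below $b_{i_0}$ and columns left of $a_{i_0}$, and the lower bound by showing any decomposition has a unique two-dimensional piece $x_1^cx_2^dK[x_1,x_2]$ which forces at least $d-b_1$ row-type and $c-a_m$ column-type one-dimensional pieces. The only cosmetic difference is that the paper first normalizes to $a_m=b_1=0$ via Proposition 2.3, while you carry the offsets explicitly.
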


\begin{proof}
Using Proposition \ref{p3} and the fact that $I=x_1^{a_m}x_2^{b_1}(I:x_1^{a_m}x_2^{b_1})$, we can assume that $a_m=b_1=0$.
Let $\mathcal D\;:\;I=\bigoplus\limits_{i=1}^r u_i K[Z_i]$ be a Stanley decomposition of $I$. Without any loss of generality, we
can assume that $Z_1=\{x_1,x_2\}$, $u_1=x_1^cx_2^d$ and $|Z_i|\leq 1$ for all $2\leq i\leq r$, where $c,d$ are some 
nonnegative integers. Since $x_1^{a_1},x_2^{b_m}\in G(I)$, it is easy to see that 
$$x_1^{\ell}K[x_2]\cap I\neq 0\text{ for all }0\leq \ell\leq c-1,\text{ and }
  x_2^{\ell}K[x_1]\cap I\neq 0\text{ for all }0\leq \ell\leq d-1.$$
Moreover, for each $0\leq \ell \leq c-1$, there exists a unique $i_{\ell}\in \{2,\ldots,r\}$ with $Z_{i_{\ell}}=\{x_2\}$ and
$$u_{i_{\ell}}K[Z_{i_{\ell}}] \subset x_1^{\ell}K[x_2]\cap I.$$
Also, for each $0\leq \ell \leq d-1$, there exists a unique $j_{\ell}\in \{2,\ldots,r\}$ with $Z_{j_{\ell}}=\{x_1\}$ and
$$u_{j_{\ell}}K[Z_{j_{\ell}}] \subset x_2^{\ell}K[x_1]\cap I.$$
This shows that $r\geq c+d+1$. Since $u_1\in I$, it follows that
$$\slength(I)\geq \min\{a_i+b_i\;:\;1\leq i\leq m \}+1.$$
In order to prove the other inequality, we fix some $1\leq i\leq m$ and we construct a Stanley decomposition of $I$ as follows:
We let $u_1=x_1^{a_i}x_2^{b_i}$. For $0\leq \ell\leq a_i-1$, we let 
$$k_{\ell}:=\min\{k\;:\;x_1^{\ell}x_2^k\in I\}\text{ and }u_{\ell+2}=x_1^{\ell}x_2^{k_{\ell}}.$$ 
It follows that $I\cap x_1^{\ell}K[x_2]=u_{\ell+2}K[x_2]$. We let $Z_{\ell+2}:=\{x_2\}$.

Similarly, for $0\leq \ell\leq a_i-1$, we let $$t_{\ell}:=\min\{t\;:\;x_1^tx_2^{\ell}\in I\}\text{ and }
u_{\ell+2+a_i}=x_1^{t_{\ell}}x_2^{\ell}.$$ 
It follows that $I\cap x_2^{\ell}K[x_1]=u_{\ell+2+a_i}K[x_1]$. We let $Z_{\ell+2}:=\{x_1\}$.
Therefore, $$I=\bigoplus_{i=1}^{a_i+b_i+1}u_iK[Z_i],$$ 
is a Stanley decomposition of $I$.
\end{proof}

\begin{exm}\label{exp28}\rm
Let $I=(x_1^5, x_1^3x_2, x_1^2x_2^3, x_2^4)\subset S=K[x_1,x_2]$. We have $m=4$, $a_1=5$, $a_2=3$, $a_3=2$, $a_4=0$, $b_1=0$,
$b_2=1$, $b_3=3$ and $b_4=4$. According to Theorem \ref{p28}, it follows that:
$$\slength(I)=\min\{a_i+b_i\;|\;1\leq i\leq 4\} - a_4 - b_1 + 1 = 4 - 0 - 0 + 1 = 5.$$
\end{exm}

We denote $\Mon(S)$, the set of monomials from $S$. The following result is a 
reinterpretation of \cite[Theorem 2.1]{seyed}:

\begin{teor}\label{sey}
Let $0\subset I_1\subsetneq J_1 \subset S$ and $0\subset I_2\subsetneq J_2 \subset S$ be monomial ideals.
Assume that there exists a function $\Phi:\Mon(S)\to\Mon(S)$, such that the following
conditions are satisfied:
\begin{enumerate}
\item[(i)] for every monomial $u\in\Mon(S)$, $u \in I_1$ if and only if $\Phi(u) \in I_2$.
\item[(ii)] for every monomial $u\in\Mon(S)$, $u \in J_1$ if and only if $\Phi(u) \in J_2$.
\item[(iii)] for every Stanley space $uK[Z] \subset S$ and every monomial $v\in\Mon(S)$, $v \in uK[Z]$ if and only if $\Phi(v) \in \Phi(u)K[Z]$.
\end{enumerate}
Then $\slength(J_1/I_1 ) \leq \slength(J_2/I_2)$.
\end{teor}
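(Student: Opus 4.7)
The plan is to translate any Stanley decomposition of $J_2/I_2$ into one of $J_1/I_1$ of no larger length, using $\Phi$ to pull back Stanley subspaces one at a time.

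I would start with a Stanley decomposition
$$\mathcal{D}_2:\; J_2/I_2 = \bigoplus_{i=1}^r u_iK[Z_i]$$
of length $r = \slength(J_2/I_2)$. For each $i$ I set
$$V_i := \{v\in\Mon(S) : \Phi(v) \in u_iK[Z_i]\}.$$
Conditions (i) and (ii) force every $v\in V_i$ to lie in $J_1\setminus I_1$, since $u_iK[Z_i]$ consists of monomials of $J_2\setminus I_2$. Because the subspaces $u_iK[Z_i]$ partition the monomials of $J_2\setminus I_2$ and $\Phi$ is defined on all of $\Mon(S)$, the sets $V_i$ form a partition of the monomials of $J_1\setminus I_1$.

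The main claim is that, for every nonempty $V_i$, there exists a monomial $w_i\in V_i$ such that $V_i$ coincides with the set of monomials of the Stanley subspace $w_iK[Z_i]\subset S$. The easy inclusion is that any choice of $w\in V_i$ gives $wK[Z_i]\subset V_i$: by (iii) applied to the Stanley space $wK[Z_i]$, a monomial $v$ lies in $wK[Z_i]$ iff $\Phi(v)\in\Phi(w)K[Z_i]\subset u_iK[Z_i]$, and then $v\in J_1\setminus I_1$ by (i) and (ii). For the reverse inclusion, the right choice is to take $w_i\in V_i$ whose image $\Phi(w_i)$ has the smallest $K[Z_i]$-part above $u_i$, in the sense that $\Phi(w_i)/u_i$ divides $\Phi(v)/u_i$ in $K[Z_i]$ for every $v\in V_i$. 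Given such a $w_i$, every $v\in V_i$ satisfies $\Phi(v)\in\Phi(w_i)K[Z_i]$, so applying (iii) backwards yields $v\in w_iK[Z_i]$.

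The main obstacle will be the existence of this minimal $w_i$, which is the content imported from \cite[Theorem 2.1]{seyed}: one has to show that the family of monomials $\{\Phi(v)/u_i\;:\;v\in V_i\}\subset K[Z_i]$ admits a greatest common divisor that is actually attained inside the image of $\Phi|_{V_i}$. Following the strategy of that theorem, this is done by a gcd/closure argument, applying (iii) to auxiliary Stanley spaces $vK[Z']$ with $Z'\subset Z_i$ to merge any two candidates $v_1,v_2\in V_i$ into a new element of $V_i$ whose image has $K[Z_i]$-part $\gcd(\Phi(v_1)/u_i,\Phi(v_2)/u_i)$, together with a Dickson/Noetherian descent guaranteeing that the minimum is reached in finitely many steps. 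Once the claim is established,
$$\mathcal{D}_1:\; J_1/I_1 = \bigoplus_{V_i\neq\emptyset} w_iK[Z_i]$$
is a Stanley decomposition of $J_1/I_1$ with $\slength(\mathcal{D}_1)\leq r$, hence $\slength(J_1/I_1)\leq\slength(J_2/I_2)$, as required.
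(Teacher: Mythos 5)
Your proposal is correct and follows essentially the same route as the paper: both pull back a minimal-length Stanley decomposition of $J_2/I_2$ through $\Phi$, replace each nonempty preimage set by the Stanley space generated by the greatest common divisor (your ``minimal $w_i$'') over the same $Z_i$, and defer the key technical step --- that this gcd is attained and generates exactly the preimage set --- to the proof of \cite[Theorem 2.1]{seyed}. Your write-up is somewhat more explicit than the paper's about which inclusions are easy and which require the cited argument, but the structure and the imported content are identical.
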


\begin{proof}
Consider a Stanley decomposition 
$$\mathcal D\;:\;J_2/I_2=\bigoplus_{i=1}^r v_iK[Z_i]$$
of $J_2/I_2$ such that $\slength(J_2/I_2)=\slength(\mathcal D)$. For each $1\leq i\leq r$, let
$$U_i=\{u\in J_1\setminus I_1\;:\;u\text{ is a monomial with }\Phi(u)\in v_iK[Z_i]\}.$$
Without any loss of generality, we may assume that there exists $1\leq \ell\leq r$ such that, 
$U_i=\emptyset$ if and only if $\ell+1\leq i\leq r$. For $1\leq i\leq \ell$, let $u_i$ be the greater common divisor
of the monomials from $U_i$. According to the proof of \cite[Theorem 2.1]{seyed},
$$\mathcal D'\;:\;J_1/I_1=\bigoplus_{i=1}^{\ell} u_iK[Z_i]$$
is a Stanley decomposition of $J_1/I_1$. Hence
$$\slength(J_1/I_1)\leq \slength(\mathcal D')=\ell\leq r=\slength(J_2/I_2),$$
which completes the proof.
\end{proof}

Now, we are able to prove the following corollary, which extends the first assertion from Proposition \ref{p4}:

\begin{cor}\label{cc1}
Let $0\subset I\subsetneq J\subset S$ be two monomial ideals and $v\in S$ a monomial, such that $(I:v)\subsetneq (J:v)$.
Then $$\slength((J:v)/(I:v))\leq \slength(J/I).$$
\end{cor}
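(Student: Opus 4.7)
The plan is to derive the corollary as a direct application of Theorem \ref{sey}. I would set $I_1 := (I:v)$, $J_1 := (J:v)$, $I_2 := I$, $J_2 := J$, which satisfy the ambient hypothesis $0 \subset I_1 \subsetneq J_1$ by the assumption that $(I:v) \subsetneq (J:v)$. The natural candidate for the comparison map is $\Phi \colon \Mon(S) \to \Mon(S)$ defined by $\Phi(u) := uv$; this sends monomials of $(I:v)$ and $(J:v)$ exactly onto the monomials of $I$ and $J$ lying in the multiples of $v$.

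Verifying the three hypotheses of Theorem \ref{sey} for this $\Phi$ is essentially routine. Conditions (i) and (ii) are the very definition of the colon ideal: $u \in (I:v) \iff uv \in I$, and similarly for $J$. For condition (iii), given a Stanley space $uK[Z]$ and a monomial $w$, the containment $w \in uK[Z]$ is equivalent to $w = u\mu$ for some monomial $\mu \in K[Z]$, which in turn is equivalent to $wv = (uv)\mu$, i.e.\ $\Phi(w) \in \Phi(u)K[Z]$. Applying Theorem \ref{sey} yields
$$\slength((J:v)/(I:v)) = \slength(J_1/I_1) \leq \slength(J_2/I_2) = \slength(J/I),$$
as required.

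I do not expect a genuine obstacle here: the map $\Phi(u) = uv$ is essentially forced by the definition of the colon ideal, and the verification of condition (iii) reduces to the observation that multiplication by the fixed monomial $v$ preserves divisibility and the support condition $\supp(w/u) \subset Z$. The hypothesis $(I:v) \subsetneq (J:v)$ is used only to ensure the non-degeneracy required to invoke Theorem \ref{sey}; note that the strict containment $I \subsetneq J$ by itself does not guarantee the corresponding strict containment after colon with $v$.
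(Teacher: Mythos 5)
Your proposal is correct and follows essentially the same route as the paper: the paper also applies Theorem \ref{sey} with $\Phi(u)=uv$, checking that $u\in(J:v)\setminus(I:v)$ iff $uv\in J\setminus I$ and that $w\in uK[Z]$ iff $wv\in uvK[Z]$. Your verification of condition (iii) via cancellation of the fixed monomial $v$ is exactly the intended argument.
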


\begin{proof}
Similar to the proof of \cite[Proposition 2.5]{seyed}, we let 
$$\Phi:\Mon(S)\to\Mon(S),\; \Phi(u)=uv,\text{ for all }u\in\Mon(S).$$
It is clear that, given a monomial $u\in S$, $u\in (J:v)\setminus (I:v)$ if and only if $\Phi(u)=uv\in J\setminus I$.
Also, if $w\in\Mon(S)$ then $w\in uK[Z]$ if and only if $\Phi(w)=wv\in uvK[Z]=\Phi(u)K[Z]$, for any $Z\subset \{x_1,\ldots,x_n\}$.
Hence, the conclusion follows from Theorem \ref{sey}.
\end{proof}

Given a monomial ideal $I\subset S$, its integral closure is the monomial ideal $\overline{I}$, generated by the monomials $u\in S$,
such that there exists $s\geq 1$ with $u^s\in I^s$; see for instance \cite[Theorem 1.4.2]{hh}. 
Using the arguments from \cite[Proposition 2.3]{seyed}
and \cite[Proposition 2.4]{seyed}, we deduce the following:

\begin{cor}\label{c2}
Let $0\subset I\subsetneq J\subset S$ be two monomial ideals such that $\overline I\subsetneq \overline J$. Then:
\begin{enumerate}
\item For every integer $k\geq 1$, we have $\slength(\overline J/\overline I)\leq \slength(\overline{J^k}/\overline{I^k})$.
\item There exists an integer $k\geq 1$, such that $\slength(\overline J/\overline I)\leq \slength(J^{ks}/I^{ks})$, for all $s\geq 1$.
\end{enumerate} 
\end{cor}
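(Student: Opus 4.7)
The plan is to deduce both parts from Theorem \ref{sey}, by producing, in each case, a suitable map $\Phi:\Mon(S)\to\Mon(S)$ that compares the pair $(\overline{I},\overline{J})$ with the target pair of ideals.

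For part (1), I would take $\Phi(u)=u^k$ and set $I_1=\overline{I}$, $J_1=\overline{J}$, $I_2=\overline{I^k}$, $J_2=\overline{J^k}$. Condition (i) of Theorem \ref{sey} reads off the definition of integral closure: if $u^s\in I^s$ for some $s\geq 1$, then $(u^k)^s=(u^s)^k\in (I^s)^k=(I^k)^s$, so $u^k\in\overline{I^k}$, and the converse is symmetric; condition (ii) is identical with $J$ in place of $I$. For (iii), $v\in uK[Z]$ is equivalent to $u\mid v$ together with $\supp(v/u)\subset Z$, and both divisibility of monomials and supports are preserved by raising to the $k$-th power. Finally, the required strict inclusion $\overline{I^k}\subsetneq\overline{J^k}$ follows from $\overline{I}\subsetneq\overline{J}$ by applying (i) to any monomial in the difference.

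For part (2), the key preliminary observation is that one can choose a single integer $k\geq 1$ with the property that, for every monomial $u$ and every $s\geq 1$,
$$u\in\overline{I}\iff u^{ks}\in I^{ks},\qquad u\in\overline{J}\iff u^{ks}\in J^{ks}.$$
To produce such a $k$, I would fix a minimal monomial generating set of $\overline{I}$; each generator $v_i$ satisfies $v_i^{s_i}\in I^{s_i}$ for some $s_i\geq 1$, and one takes $k_1$ to be a common multiple of the $s_i$, so that $v_i^{k_1}\in I^{k_1}$ simultaneously. An arbitrary $u\in\overline{I}$ factors as $u=v_i w$ for some generator $v_i$ and monomial $w$, hence $u^{k_1}=v_i^{k_1}w^{k_1}\in I^{k_1}$, and therefore $u^{k_1 s}=(u^{k_1})^s\in I^{k_1 s}$ for every $s\geq 1$. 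The reverse implication is immediate from the definition of integral closure. Choose $k_2$ analogously for $\overline{J}$ and set $k=\lcm(k_1,k_2)$.

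With this $k$ fixed, for each $s\geq 1$ I would apply Theorem \ref{sey} with $\Phi(u)=u^{ks}$, $I_1=\overline{I}$, $J_1=\overline{J}$, $I_2=I^{ks}$, $J_2=J^{ks}$: conditions (i) and (ii) are precisely the content of the preliminary observation, condition (iii) is handled as in part (1), and the strict inclusion $I^{ks}\subsetneq J^{ks}$ is furnished by the image under $\Phi$ of any monomial in $\overline{J}\setminus\overline{I}$. The main substantive step is the uniform choice of the exponent $k$ in part (2), which relies crucially on the finite generation of $\overline{I}$ and $\overline{J}$ as monomial ideals; everything else is a mechanical verification of the hypotheses of Theorem \ref{sey}.
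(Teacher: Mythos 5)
Your proof is correct and takes essentially the same route as the paper, whose ``proof'' consists only of invoking the arguments of \cite[Propositions 2.3 and 2.4]{seyed} together with Theorem \ref{sey}. Your write-up supplies exactly the details that citation delegates: the map $\Phi(u)=u^k$ for part (1), and the uniform exponent $k$ (obtained from the finitely many minimal generators of $\overline I$ and $\overline J$) for part (2).
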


We recall the definition of the polarization of a monomial ideal, given in \cite{hh}.
Let $I\subset S$ be a monomial ideal with $G(I)=\{u_1,\ldots,u_m\}$, where $u_i=\prod\limits_{j=1}^n x_j^{a_{ij}}$ for $1\leq i\leq m$.
For each $j$, let $a_j=\max\{a_{ij}\;:\;1\leq i\leq m\}$. Set $a=(a_1,\ldots,a_n)$. We choose $g=(g_1,\ldots,g_n)\in \mathbb N^n$
such that $a_j\leq g_j$ for all $1\leq j\leq n$, and set $S^p$ to be the polynomial ring
$$S^p = K[x_{jk}\;:\;1\leq j\leq n,\;1\leq k\leq g_j].$$
Then the polarization of $I$ is the squarefree monomial ideal $I^p\subset S^p$ generated by $v_1,\ldots,v_m$, where
$$v_i=\prod_{j=1}^n\prod_{k=1}^{a_{ij}}x_{jk},\;1\leq i\leq m.$$
The polarization can be done also step by step. The first step polarization of $I$, with respect to $x_j$, is the
ideal $I^1\subset S[y]$, generated by $v_1,\ldots,v_m$, where
$$v_i=\begin{cases} yu_i/x_j,&x_j^2\mid u_i \\ u_i,& x_j^2\nmid u_i \end{cases}.$$
This procedure should be used $g_1+g_2+\cdots+g_n-n$ times, in order to obtain $I^p$.


\begin{teor}\label{t1}
Let $0\subset I\subsetneq J\subset S$ be two monomial ideals and let $0\subset I^p\subsetneq J^p\subset S^p$ be their polarizations.
Then $$\slength(J/I) \geq \slength(J^p/I^p).$$
\end{teor}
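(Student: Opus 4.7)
The plan is to use induction on the number of steps in the step-by-step polarization procedure. Since the polarization of the pair $(J,I)$ into $(J^p,I^p)$ is performed by $\sum_j(g_j-1)$ one-step polarizations, each of which introduces a single new variable $y$ and replaces every generator $u_i$ with $x_j^2\mid u_i$ by $yu_i/x_j$, it suffices to prove the inequality for a single such step: if $I^1,J^1\subset S^1=S[y]$ are the first-step polarizations of $I,J$ with respect to a single variable (say $x_n$), then $\slength(J^1/I^1)\leq\slength(J/I)$. Iterating will then finish the argument.

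Fix a Stanley decomposition $\mathcal D\;:\;J/I=\bigoplus_{i=1}^r w_iK[Z_i]$ of minimal length $r=\slength(J/I)$. The plan is to produce a Stanley decomposition $\mathcal D^1\;:\;J^1/I^1=\bigoplus_{i=1}^r \tilde w_iK[\tilde Z_i]$ of length $r$ by assigning to each summand of $\mathcal D$ a lifted Stanley space in $S^1$. The lift will depend on the quantity $c_i:=\deg_{x_n}(w_i)$ and on whether $x_n\in Z_i$. Roughly speaking, $\tilde w_i$ will be a polarized version of $w_i$ (with a trailing $x_n$ replaced by $y$ exactly as the first-step polarization rule prescribes for generators), and $\tilde Z_i$ will be an enlargement of $Z_i$ by adjoining one or both of $\{x_n,y\}$ so that the lifted spaces together tile the monomials of $J^1\setminus I^1$.

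The hard part will be that the lift of an individual summand is not determined locally but depends on the global structure of $\mathcal D$. For example, in the one-variable case $I=(x^a)\subset K[x]$ with standard decomposition $S/I=\bigoplus_{i=0}^{a-1} x^i K[\emptyset]$, the correct lift of $x^iK[\emptyset]$ to a summand of $S[y]/(x^{a-1}y)$ is $x^iK[y]$ when $i<a-1$ but $x^{a-1}K[x]$ when $i=a-1$; changing $a$ changes which summand plays the role of the ``last'' one. To handle this systematically, I plan to process the summands of $\mathcal D$ in order of increasing $c_i$ and assign lifts greedily, covering the monomials of $J^1\setminus I^1$ not yet used. The remaining verifications---that each $\tilde w_iK[\tilde Z_i]$ lies in $J^1\setminus I^1$, that distinct summands are pairwise disjoint, and that their union exhausts $J^1\setminus I^1$---will then be a careful but routine combinatorial check using the explicit description of $I^1$ in terms of the generators of $I$. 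Since no summand is split or duplicated, the resulting decomposition will have length $r$, yielding $\slength(J^1/I^1)\leq\slength(J/I)$ and completing the inductive step.
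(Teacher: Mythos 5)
Your reduction to a single one-step polarization is exactly the paper's first move and is fine. The gap is everything after that: the existence of a length-preserving lift of $\mathcal D$ to a Stanley decomposition of $J^1/I^1$ \emph{is} the theorem, and you have replaced it by an unspecified ``greedy'' assignment together with a deferred ``routine combinatorial check.'' Nothing in the proposal explains what the greedy rule actually assigns to a given summand, why a valid Stanley space is always available at each step, why the assigned spaces are pairwise disjoint, or why they exhaust $J^1\setminus I^1$; these verifications are the nontrivial content (in the source the paper cites, Ichim--Katth\"an--Moyano-Fern\'andez, they occupy the bulk of the argument), so they cannot be waved through as routine.

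Moreover, the premise motivating your greedy scheme --- that the lift of a summand ``is not determined locally'' --- is false, and recognizing this is what makes the proof work. There is an explicit local rule: set $\Phi(u)=yu/x_n$ if $x_n^2\mid u$ and $\Phi(u)=u$ otherwise, and lift $u_iK[Z_i]$ to $\Phi(u_i)K[Z_i']$ where $Z_i'=Z_i\cup\{y\}$ in all cases except when $x_n\notin Z_i$ and $x_n$ exactly divides $u_i$, in which case $Z_i'=Z_i\cup\{x_n\}$. In your own test case $S/(x^a)=\bigoplus_{i=0}^{a-1}x^iK[\emptyset]$ this rule sends $K[\emptyset]\mapsto K[y]$, $xK[\emptyset]\mapsto xK[x]$, and $x^iK[\emptyset]\mapsto yx^{i-1}K[y]$ for $i\geq 2$, which tiles the complement of $(yx^{a-1})$ in $K[x,y]$ for every $a\geq 2$ --- no summand's image depends on $a$. (Your alternative lift, sending the last summand to $x^{a-1}K[x]$, also happens to work, but it is not forced.) To close the gap you must either state this local rule and verify the three properties (containment in $J^1\setminus I^1$, disjointness, exhaustion), or cite the polarization theorem of Ichim--Katth\"an--Moyano-Fern\'andez as the paper does.
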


\begin{proof}
Since $I^p$ and $J^p$ can be obtained by repeating the $1$-step polarization several times, it is enough to prove that
$\slength(J/I)\geq \slength(J^1/I^1)$,
where $I^1,J^1\subset S^1=S[y]$ are the first step polarizations of $I$ and $J$, with respect to, let us say $x_n$.

We consider the map $\Phi:\Mon(S)\to\Mon(S[y])$, defined by
$$\Phi(u)=\begin{cases} yu/x_n,&x_n^2\mid u \\ u,& x_n^2\nmid u \end{cases},\text{ for all monomials }u\in S.$$
Let $\mathcal D\;:\;J/I=\bigoplus\limits_{i=1}^r u_i K[Z_i]$ be a Stanley decomposition of $J/I$ with 
$$\slength(\mathcal D)=\slength(J/I).$$
We consider the subsets of variables
$$Z'_i=\begin{cases} Z_i\cup\{y\},&x_n\in Z_i \\ Z_i\cup\{ \{x_n,y\}\setminus\{ \Phi(x_nu_i)/\Phi(u_i) \} \},& x_n\notin Z_i  \end{cases},
\text{ for all }1\leq i\leq r.$$
Note that $Z'_i=Z_i\cup\{y\}$, with the exception of the case $x_n\notin Z_i$, $x_n\mid u_i$ and $x_n^2\nmid u_i$, when $Z'_i=Z_i\cup\{x_n\}$.

From the proof of \cite[Theorem 4.3]{ichim1}, it follows that
$$\mathcal D'\;:\;J^1/I^1 = \bigoplus_{i=1}^r \Phi(u_i)K[Z'_i]$$
is a Stanley decomposition of $J^1/I^1$. Hence $\slength(J/I) \geq \slength(J^1/I^1)$, as required.
\end{proof}

\begin{teor}\label{t2}
Let $0\subset I\subsetneq J\subset S$ be two monomial ideals. Then
$$\slength(J/I)\geq \slength(\sqrt{J}/\sqrt{I}).$$
\end{teor}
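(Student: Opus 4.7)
The plan is to apply Theorem~\ref{sey} with $(I_1,J_1):=(\sqrt{I},\sqrt{J})$ and $(I_2,J_2):=(I,J)$, so that a minimum-length Stanley decomposition of $J/I$ is converted into a Stanley decomposition of $\sqrt{J}/\sqrt{I}$ of no greater length. The whole argument reduces to exhibiting a function $\Phi\colon\Mon(S)\to\Mon(S)$ satisfying axioms (i)--(iii) of that theorem. My candidate is $\Phi(u):=u^N$, where $N$ is any positive integer strictly larger than every exponent $\deg_{x_j}(w)$ that appears for some $w\in G(I)\cup G(J)$ and some variable $x_j$.

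For axiom (iii), the equivalence $v\in uK[Z]\;\Longleftrightarrow\;\Phi(v)\in\Phi(u)K[Z]$ is essentially formal: $v\in uK[Z]$ means $u\mid v$ together with $\supp(v/u)\subseteq Z$, and both properties are manifestly preserved and reflected by raising to the $N$-th power, since $u\mid v\Leftrightarrow u^N\mid v^N$ and $v/u$ has the same support as $(v/u)^N=v^N/u^N$.

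The substance of the argument lies in axioms (i) and (ii), which amount to the equivalence $u\in\sqrt{I}\Leftrightarrow u^N\in I$ (and the analogue for $J$). For the forward direction, $u\in\sqrt{I}$ means that some $w\in G(I)$ satisfies $\supp(w)\subseteq\supp(u)$; writing $w=\prod_j x_j^{c_j}$ and $u=\prod_j x_j^{a_j}$, any index with $c_j>0$ forces $a_j\geq 1$, whence $c_j\leq N\leq Na_j$ by the choice of $N$, so $w\mid u^N$ and $u^N\in I$. Conversely, if some $w\in G(I)$ divides $u^N$, then $c_j>0$ forces $Na_j\geq c_j>0$, i.e.\ $a_j\geq 1$, so $\supp(w)\subseteq\supp(u)$ and $u\in\sqrt{I}$. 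An identical argument works for $J$. With (i)--(iii) verified, Theorem~\ref{sey} delivers the stated inequality.

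The only real care needed is in choosing $N$ large enough that the implication ``$\supp(w)\subseteq\supp(u)\Rightarrow w\mid u^N$'' is automatic for every minimal generator of $I$ and of $J$; once $N$ is fixed, everything else is a clean verification, and I expect no further obstacle.
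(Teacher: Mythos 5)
Your proof is correct, but it takes a different route from the one the paper actually writes out. The paper's proof is constructive in the style of \cite[Theorem 2.1]{ishaq}: starting from a minimal Stanley decomposition $\bigoplus_{i=1}^r u_iK[Z_i]$ of $J/I$, it selects the subset $\mathcal I$ of indices $i$ for which $a\mid\deg_{x_k}(u_i)$ whenever $x_k\notin Z_i$ (with $a$ a uniform bound on the exponents of the generators), replaces each exponent $b_{ij}$ by $\lceil b_{ij}/a\rceil$, and checks that the resulting spaces decompose $\sqrt{J}/\sqrt{I}$; the inequality then follows from $|\mathcal I|\leq r$. You instead package everything into an application of Theorem \ref{sey} with $\Phi(u)=u^N$, which is exactly the alternative the author flags in the remark immediately following the theorem (``apply Theorem \ref{sey} and use the arguments of \cite[Proposition 2.2]{seyed}''). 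Your verification of axioms (i)--(iii) is sound: the equivalence $u\in\sqrt{I}\Leftrightarrow u^N\in I$ for $N$ exceeding all exponents of the generators is the only substantive point, and your support argument handles both directions correctly; axiom (iii) is formal as you say. What your route buys is brevity and a cleaner logical structure (all the combinatorial work is delegated to Theorem \ref{sey}); what the paper's route buys is an explicit description of the induced decomposition of $\sqrt{J}/\sqrt{I}$. One shared loose end, present in both arguments and not your fault: Theorem \ref{sey} is stated for $I_1\subsetneq J_1$, so the degenerate case $\sqrt{I}=\sqrt{J}$ (possible even when $I\subsetneq J$, e.g.\ $I=(x_1^2)\subsetneq J=(x_1)$) needs to be disposed of separately by the convention $\slength(0)=0$.
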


\begin{proof}
Let $\mathcal D\;:\;J/I=\bigoplus\limits_{i=1}^r u_iK[Z_i]$ be a Stanley decomposition of $J/I$ with $$\slength(\mathcal D)=\slength(J/I).$$
Let $a\in\mathbb N$ be the maximal degree of a variable $x_j$ with $x_j^a\mid u$  for some $u\in G(I)\cup G(J)$. 
This means that if $v\in G(\sqrt{I})\cup G(\sqrt{J})$ then $v^a\in I+J$. We consider the set of indexes
$$\mathcal I = \{i\in\{1,2,\ldots,r\}\;:\;\text{ If }x_k\notin Z_i,\text{ then }a\mid \deg_{x_k}(u_i)\},$$
where $\deg_{x_k}(u_i)=\max\{b\;:\;x_k^b\mid u_i\}$. Let $c_{ij}=\left\lceil \frac{b_{ij}}{a} \right\rceil$, for all $i\in\mathcal I$ and $1\leq j\leq n$. 
Let $$u'_i=x_{1}^{c_{i1}}x_2^{c_{i2}}\cdots x_n^{c_{in}},\text{ for all }i\in\mathcal I.$$
According to the proof of \cite[Theorem 2.1]{ishaq}, it follows that
$$\mathcal D'\;:\;\sqrt{J}/\sqrt{I} = \bigoplus_{i\in\mathcal I} u'_iK[Z_i],$$
is a Stanley decomposition of $\sqrt{J}/\sqrt{I}$. Since $\mathcal I\subset\{1,2,\ldots,r\}$, we get the required conclusion.
\end{proof}

\begin{obs}\rm
Another way to proof Theorem \ref{t2} is to apply Theorem \ref{sey} and to use the arguments from the proof of
\cite[Proposition 2.2]{seyed}.
\end{obs}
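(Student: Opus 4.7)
The plan is to supply the map $\Phi$ required by Theorem \ref{sey} and check its three hypotheses with $(I_1,J_1)=(\sqrt I,\sqrt J)$ and $(I_2,J_2)=(I,J)$. Following \cite[Proposition 2.2]{seyed}, I would let $a=\max\{\deg_{x_j}(u)\;:\;u\in G(I)\cup G(J),\;1\leq j\leq n\}$, exactly the integer that appears in the proof of Theorem \ref{t2}, and define $\Phi:\Mon(S)\to\Mon(S)$ by $\Phi(u)=u^a$. Once Theorem \ref{sey} applies to this data, the conclusion $\slength(\sqrt J/\sqrt I)\leq \slength(J/I)$ is immediate.

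Next I would verify conditions (i) and (ii), which both reduce to the single equivalence $u\in\sqrt I\iff u^a\in I$ (and the same for $J$). The implication $u^a\in I\Rightarrow u\in\sqrt I$ is built into the definition of the radical. For the converse, the point is that, for any squarefree generator $v\in G(\sqrt I)$, minimality of $v$ forces the existence of some $u_0\in G(I)$ with $\supp(u_0)=\supp(v)$; then every exponent of $u_0$ is at most $a$ by construction, so $u_0\mid v^a$ and $v^a\in I$. A general $u\in\sqrt I$ factors as $u=vw$ with $v\in G(\sqrt I)$, giving $u^a=v^aw^a\in I$.

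For condition (iii) I would observe that, given monomials $u,v$ and $Z\subset\{x_1,\ldots,x_n\}$, the assertion $v\in uK[Z]$ is exactly ``$u\mid v$ and $v/u\in K[Z]$'', while $v^a\in u^aK[Z]$ is ``$u^a\mid v^a$ and $(v/u)^a\in K[Z]$''; since $u^a\mid v^a\iff u\mid v$, and $(v/u)^a$ involves the same variables as $v/u$, these pairs of conditions are manifestly equivalent. Theorem \ref{sey} then delivers the desired inequality.

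The main subtlety, and essentially the only non-formal step, is the forward direction of (i): one must explain why the single global exponent $a$ extracted from the generators of $I$ and $J$ is large enough to push \emph{every} squarefree generator of $\sqrt I$ into $I$ after the $a$-th power. Once that has been pinned down via the support argument above, condition (ii) is word-for-word identical, and condition (iii) is a one-line computation, so the alternative proof collapses to a short application of Theorem \ref{sey}.
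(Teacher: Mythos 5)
Your proposal is correct and is exactly the route the remark has in mind: you instantiate Theorem \ref{sey} with $\Phi(u)=u^a$ for the global exponent bound $a$, and your verifications of conditions (i)--(iii) (in particular the support argument showing $v^a\in I$ for each $v\in G(\sqrt I)$, which works because every minimal generator of $\sqrt I$ is the radical of some element of $G(I)$) are sound. This matches the argument of \cite[Proposition 2.2]{seyed} that the remark cites, so nothing further is needed.
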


For all $1\leq k\leq n$, 
we consider the numbers
\begin{align*}
& \alpha_k(I)=\max\{j\;:\;x_k^j\mid u\text{ for all }u\in G(I)\}\text{ and }\\
& \beta_k(I)=\max\{j\;:\;x_k^j\mid u\text{ for some }u\in G(I)\}.
\end{align*}

\begin{teor}\label{t3}
Let $0\neq I\subset S$ be a monomial ideal. With the above notations, we have that
$$\slength(I)\leq \prod_{k=2}^n (\beta_k(I)-\alpha_k(I)+1).$$
\end{teor}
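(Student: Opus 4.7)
The plan is to construct an explicit Janet-style Stanley decomposition of $I$ in which $x_1$ always appears as a multiplicative variable. For each lattice point $\mathbf{a} = (a_2, \ldots, a_n)$ in the discrete box $\prod_{k=2}^{n}[\alpha_k(I),\beta_k(I)]$, I would set
$$u_{\mathbf{a}} = x_2^{a_2} \cdots x_n^{a_n}, \qquad Z_{\mathbf{a}} = \{x_1\} \cup \{x_k : 2 \leq k \leq n,\; a_k = \beta_k(I)\}.$$
A straightforward case distinction shows that the sets $u_{\mathbf{a}} K[Z_{\mathbf{a}}]$ are pairwise disjoint and that they cover every monomial $w \in S$ with $\deg_{x_k}(w) \geq \alpha_k(I)$ for all $k \geq 2$: the unique $\mathbf{a}$ containing $w$ is recovered by $a_k = \min\{\deg_{x_k}(w), \beta_k(I)\}$, which is forced by whether $\deg_{x_k}(w)$ falls below $\beta_k(I)$ or not.

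Since every $u \in G(I)$ is divisible by $x_k^{\alpha_k(I)}$ by definition, the lower bounds $\deg_{x_k}(w) \geq \alpha_k(I)$ automatically hold for every monomial $w \in I$, so the boxes cover $I$ and it suffices to analyse each intersection $I \cap u_{\mathbf{a}} K[Z_{\mathbf{a}}]$. The crucial observation is that for $k \in Z_{\mathbf{a}} \setminus \{x_1\}$ one has $a_k = \beta_k(I) \geq \deg_{x_k}(u_i)$ for every generator $u_i \in G(I)$, so divisibility of a monomial in the box by any $u_i$ imposes no condition whatsoever on the exponents of the variables in $Z_{\mathbf{a}} \setminus \{x_1\}$. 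Consequently, letting $\mathcal{J}_{\mathbf{a}}$ denote the set of generators $u_i$ whose exponents in the frozen directions $k \notin Z_{\mathbf{a}}$ satisfy $\deg_{x_k}(u_i) \leq a_k$, the intersection is either empty (when $\mathcal{J}_{\mathbf{a}} = \emptyset$) or coincides with the single Stanley space $x_1^{c_{\mathbf{a}}} u_{\mathbf{a}} K[Z_{\mathbf{a}}]$, where $c_{\mathbf{a}} = \min_{i \in \mathcal{J}_{\mathbf{a}}} \deg_{x_1}(u_i)$.

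Summing the resulting Stanley spaces over the nonempty boxes yields a Stanley decomposition of $I$ of length at most $\prod_{k=2}^{n}(\beta_k(I) - \alpha_k(I) + 1)$, which is the claimed bound. The main, if modest, obstacle is verifying rigorously that the directions $x_k \in Z_{\mathbf{a}} \setminus \{x_1\}$ are genuinely free once $a_k$ is placed at the boundary value $\beta_k(I)$; the whole argument hinges on this observation, which in turn is a direct consequence of $\beta_k(I)$ being the maximum $x_k$-exponent appearing among the minimal generators of $I$.
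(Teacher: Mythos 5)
Your construction is correct, and it is in essence the closed form of the decomposition the paper obtains: the paper proceeds by induction on $n$, applying Janet's decomposition with respect to $x_n$ to write $I=\bigoplus_{\alpha\leq j<\beta}x_n^jI_j\oplus x_n^{\beta}I_{\beta}[x_n]$ and invoking the inequalities $\alpha_k(I)\leq\alpha_k(I_j)\leq\beta_k(I_j)\leq\beta_k(I)$, whereas you unroll that induction completely and exhibit the resulting decomposition explicitly, indexed by the exponent box $\prod_{k=2}^{n}[\alpha_k(I),\beta_k(I)]$, with $x_1$ kept multiplicative throughout. The key verifications in your write-up all hold: the boxes $u_{\mathbf a}K[Z_{\mathbf a}]$ partition the monomials $w$ with $\deg_{x_k}(w)\geq\alpha_k(I)$ for $k\geq 2$ (and every monomial of $I$ is such, since each minimal generator is divisible by $x_k^{\alpha_k(I)}$); the exponents in the directions $x_k\in Z_{\mathbf a}\setminus\{x_1\}$ are at least $\beta_k(I)$ and hence impose no divisibility constraint; and consequently $I\cap u_{\mathbf a}K[Z_{\mathbf a}]$ is either zero or the single Stanley space $x_1^{c_{\mathbf a}}u_{\mathbf a}K[Z_{\mathbf a}]$ with $c_{\mathbf a}=\min_{i\in\mathcal J_{\mathbf a}}\deg_{x_1}(u_i)$. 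What your version buys is self-containedness (no appeal to the lemma from the Janet's-algorithm reference and no induction) and an explicit description of the Stanley spaces, including the observation that empty boxes may shrink the count below the product; what the paper's version buys is brevity and the flexibility to peel off one variable at a time. Either way the bound $\slength(I)\leq\prod_{k=2}^{n}(\beta_k(I)-\alpha_k(I)+1)$ follows.
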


\begin{proof}
We proceed by induction on $n\geq 1$. If $n=1$, then $I=(x_a^b)$, for some nonnegative integer $b$. It follows that
$\alpha_1=\beta_1=b$ and thus $1=\slength(I)=\beta_1-\alpha_1+1=1$, as required. 

Assume $n\geq 2$. We consider the Janet's decomposition of $I$; see \cite{anwar} for further details. 
Let $S'=K[x_1,\ldots,x_{n-1}]$. For $j\geq 0$, we consider the
monomial ideals $I_j\subset S'$ defined by $I\cap x_n^jS' = x_n^jI_j$. Let $\alpha=\alpha_n(I)$ and $\beta=\beta_n(I)$. We decompose $G(I)$ as a disjoint
union $$G(I)=G_{\alpha}(I)\cup G_{\alpha+1}(I) \cup \cdots \cup G_{\beta}(I),$$
where $G_j(I)=\{u\in G(I)\;:\;\deg_{x_n}(u)=j\}$. With the above notations, we have that
$$I_j=(G_{\alpha}\cup \cdots \cup G_j):x_n^j.$$
Note that $I_j=0$ for $j<\alpha$ and $I_{\beta}=I_{\beta+1}=\cdots$. According to \cite[Lemma 2.1]{anwar}, we can write
$$I=\bigoplus_{\alpha\leq j<\beta}x_n^j I_j \oplus x_n^{\beta}I_{\beta}[x_n].$$
Moreover, if $\mathcal D_j\;:\;I_j=\bigoplus\limits_{i=1}^{r_j} u_{ij}K[Z_{ij}]$, for $\alpha\leq j\leq \beta$, are Stanley decompositions, then
\begin{equation}\label{artdeco}
\mathcal D\;:\;I=\bigoplus_{\alpha\leq j<\beta} \bigoplus_{i=1}^{r_j} x_n^ju_{ij}K[Z_{ij}]\oplus 
\bigoplus_{i=1}^{r_{\beta}}u_{ij}K[Z_{ij}\cup\{x_n\}],
\end{equation}
is a Stanley decomposition of $I$. From the definition of the ideals $I_j$'s, it is easy to note that
\begin{equation}\label{artdeco2}
\alpha_k(I) \leq \alpha_k(I_j) \leq \beta_k(I_j)\leq \beta_k(I)\text{ for all }\alpha\leq j\leq \beta.
\end{equation}
Now, the conclusion follows from \eqref{artdeco}, \eqref{artdeco2} and the induction hypothesis.
\end{proof}

\begin{exm}\label{ext3}
Let $I=(x_1^3x_2x_3^2,\;x_1x_2^4,\;x_2^2x_3^5)\subset K[x_1,x_2,x_3]$. It is easy to see that
\begin{align*}
& \alpha_2(I)=\max\{j\;:\;x_2^j\mid u\text{ for all }u\in G(I)\}=1\text{ and }\\
& \beta_2(I)=\max\{j\;:\;x_2^j\mid u\text{ for some }u\in G(I)\}=4.
\end{align*}
Similarly, we have $\alpha_3(I)=0$ and $\alpha_3(I)=5$. From Theorem \ref{t3} it follows that
$$\slength(I)\leq (\beta_2(I)-\alpha_2(I)+1)\cdot (\beta_3(I)-\alpha_3(I)+1) = 4 \cdot 6 = 26.$$
\end{exm}

\begin{lema}\label{lem}
Let $I\subset S$ be a monomial ideal with $G(I)=\{u_1,u_2,\ldots,u_m\}$. Assume that $x_n\mid u_1$ and $x_n\nmid u_i$ for all $2\leq i\leq n$.
We consider the ideals $I'=(u_1/x_1,u_2,\ldots,u_m)\subset S$ and $I''=(u_2,\ldots,u_m)\subset K[x_1,\ldots,x_{n-1}]$. Then:
$$\slength(I)\leq \slength(I')+\slength(I'').$$
\end{lema}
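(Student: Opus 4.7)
The strategy is to use the $K$-vector space splitting $S = K[x_1,\ldots,x_{n-1}] \oplus x_n S$ in order to cut $I$ into two pieces, one accounted for by $I''$ and the other by $I'$, and then glue together Stanley decompositions of those two ideals. (I read the hypotheses as $x_n\nmid u_i$ for $2\leq i\leq m$, and $I'=(u_1/x_n,u_2,\ldots,u_m)$.)

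First, I would establish the $K$-vector space identity
$$I = I'' \;\oplus\; x_n I',$$
where $I''\subset K[x_1,\ldots,x_{n-1}]$ is viewed inside $S$ in the obvious way. From $S = K[x_1,\ldots,x_{n-1}] \oplus x_n S$ we get $I = (I\cap K[x_1,\ldots,x_{n-1}]) \oplus (I\cap x_nS)$, so it suffices to identify each piece. Since $x_n\mid u_1$ but $x_n\nmid u_i$ for $i\geq 2$, a monomial in $I$ not divisible by $x_n$ must be divisible by some $u_i$ with $i\geq 2$; this gives $I\cap K[x_1,\ldots,x_{n-1}] = I''$. For the second piece, for a monomial $v\in S$ one checks that $x_n v\in I$ iff $v\in I'$: if $u_i\mid x_n v$ for some $i\geq 2$, then, since $x_n\nmid u_i$, already $u_i\mid v$; and if $u_1\mid x_n v$, then $(u_1/x_n)\mid v$. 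The reverse containment is immediate, since each generator of $I'$ multiplied by $x_n$ is divisible by some $u_j$.

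The construction then proceeds as follows. Take Stanley decompositions
$$\mathcal D'\;:\;I' = \bigoplus_{i=1}^{r'} v_i K[W_i], \qquad \mathcal D''\;:\;I'' = \bigoplus_{j=1}^{r''} w_j K[Y_j],$$
of minimal length, so $r'=\slength(I')$ and $r''=\slength(I'')$ (the second one taken in $K[x_1,\ldots,x_{n-1}]$). Since each $Y_j\subset\{x_1,\ldots,x_{n-1}\}\subset\{x_1,\ldots,x_n\}$, each $w_jK[Y_j]$ is a legitimate Stanley space in $S$, and likewise each $x_n v_iK[W_i]$ is. Combining through the splitting above,
$$\mathcal D\;:\;I = \bigoplus_{j=1}^{r''} w_j K[Y_j] \;\oplus\; \bigoplus_{i=1}^{r'} x_n v_i K[W_i]$$
is a Stanley decomposition of $I$ in $S$ with $\slength(\mathcal D)=r'+r''$, yielding the desired bound.

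The only real subtlety is the identification $I\cap x_n S = x_n I'$, which uses both halves of the hypothesis (that $x_n$ divides $u_1$ and no other generator); once this is secured, verifying that the assembled $\mathcal D$ is a direct sum and a Stanley decomposition of $I$ is routine.
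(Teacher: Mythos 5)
Your proposal is correct and follows essentially the same route as the paper: the key point in both is the splitting $I = I'' \oplus x_n I'$ (the paper phrases it via $(I:x_n)=I'$ and $(I,x_n)=(I'',x_n)$ and then invokes Propositions \ref{p1} and \ref{p3}, while you assemble the combined Stanley decomposition explicitly). You also correctly identified the typos in the statement ($u_1/x_n$ for $u_1/x_1$, and $2\leq i\leq m$ for $2\leq i\leq n$).
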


\begin{proof}
Since $(I:x_n)=I'$ and $(I,x_n)=(I'',x_n)$, it follows that we have the decomposition
$$I = I'' \oplus x_nI'.$$
Now, the conclusion follows from Proposition \ref{p1} and Proposition \ref{p3}.

\end{proof}

\begin{lema}\label{lem2}
Let $I'\subset S':=K[x_1,\ldots,x_{n-1}]$ be a monomial ideal. We consider the ideal $I=(I',x_n)\subset S$.
Then
$$\slength(I) = \slength(I')+1.$$
\end{lema}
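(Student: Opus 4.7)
The proof rests on the $K$-vector space decomposition
$$I = I' \oplus x_n S,$$
valid because a monomial of $I$ either avoids $x_n$ (and hence lies in $I \cap S' = I'$) or is divisible by $x_n$ (and hence lies in $x_n S \subset I$, as $x_n \in I$). This splits the equality into matching upper and lower bounds.

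For the upper bound, I would take any Stanley decomposition $\mathcal D' : I' = \bigoplus_{i=1}^r u_i K[Z_i]$ of length $r = \slength(I')$ inside $S'$ and adjoin the single Stanley space $x_n K[x_1,\ldots,x_n]$. The pieces $u_i K[Z_i]$ with $Z_i \subset\{x_1,\ldots,x_{n-1}\}$ are legitimate Stanley spaces inside $S$, so the resulting direct sum is a Stanley decomposition of $I$ of length $r+1$, yielding $\slength(I) \leq \slength(I')+1$. (Equivalently, this is Lemma \ref{lem} applied with $u_1 = x_n$ and the remaining generators equal to $G(I')$, where the auxiliary ideal $I' / x_n$ becomes $S$ of Stanley length $1$.)

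For the lower bound, I would start with an arbitrary Stanley decomposition $\mathcal D : I = \bigoplus_{i=1}^s v_i K[W_i]$ and partition the pieces according to whether $x_n \mid v_i$. When $x_n \mid v_i$ the piece contributes only monomials divisible by $x_n$, hence nothing to $I'$; when $x_n \nmid v_i$, the $K$-subspace of $v_i K[W_i]$ consisting of monomials not involving $x_n$ equals $v_i K[W_i\setminus\{x_n\}]$, which is a Stanley space in $S'$. Intersecting $\mathcal D$ with $S'$ and using the decomposition $I = I' \oplus x_n S$ gives the Stanley decomposition
$$\mathcal D' : I' = \bigoplus_{x_n \nmid v_i} v_i K[W_i \setminus \{x_n\}],$$
so $\slength(I') \leq \#\{i : x_n \nmid v_i\}$. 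To conclude, I observe that under the implicit assumption $I' \subsetneq S'$, the variable $x_n$ is a minimal generator of $I$ (no generator of $I'$ divides it), so by the argument of Proposition \ref{p2} some $v_j$ must equal $x_n$, giving at least one index with $x_n \mid v_j$. Hence $s \geq \slength(I') + 1$, as required. The only subtlety is verifying that the induced decomposition of $I'$ is genuine; this is a direct consequence of the original direct-sum property of $\mathcal D$ combined with $I = I' \oplus x_n S$, and is not a real obstacle.
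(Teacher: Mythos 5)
Your proof is correct and follows essentially the same route as the paper: the upper bound comes from the splitting $I = I' \oplus x_nS$ (the paper invokes Proposition \ref{p1} where you build the decomposition explicitly), and the lower bound comes from intersecting an optimal Stanley decomposition of $I$ with $S'$ and noting that the piece containing the minimal generator $x_n$ is annihilated by this intersection. The only difference is cosmetic: you make explicit the (harmless) proviso that $I'$ be a proper ideal of $S'$ so that $x_n \in G(I)$, which the paper leaves implicit.
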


\begin{proof}
Since $I=x_nS \oplus I'$, from Proposition \ref{p1}, it follows that $$\slength(I)\leq \slength(I')+1.$$
In order to prove the other inequality, let $\mathcal D\;:\;I=\bigoplus\limits_{i=1}^r u_iK[Z_i]$ be a Stanley
decomposition of $I$ with $\slength(\mathcal D)=\slength(I)$. 
Note that $$u_iK[Z_i]\cap S'=\begin{cases} 0,& x_n\mid u_i \\ u_iK[Z_i\setminus\{x_n\}],& x_n\nmid u_i \end{cases}.$$
It follows that
$$I' = I\cap S' = \bigoplus_{i=1}^r (u_iK[Z_i] \cap S').$$
Since $x_n\in G(I)$, without any loss of generality, we can assume that $u_1=x_n$. Hence
$$I' = \bigoplus_{i=2}^r (u_iK[Z_i] \cap S'),$$
is a Stanley decomposition of $I'$ of length $\leq r-1$. Hence $$\slength(I) \geq \slength(I')+1,$$ as required.
\end{proof}

\begin{teor}\label{t4}
Let $I=(u_1,u_2,\ldots,u_m)\subset S$ be a monomial complete intersection and let $d_i:=\deg(u_i)$, for $1\leq i\leq m$.
Assume that $d_1\leq d_2\leq \cdots \leq d_n$. Then
$$\slength(I)\leq 1+d_1+d_1d_2+\cdots+d_1d_2\cdots d_{m-1}.$$
\end{teor}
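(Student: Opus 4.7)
The plan is to prove a one-step reduction inequality
\[(\ast)\qquad \slength\bigl((u_1,u_2,\ldots,u_m)\bigr)\le 1+d_1\cdot\slength\bigl((u_2,\ldots,u_m)\bigr),\]
and then iterate it $m-1$ times, starting from $\slength\bigl((u_m)\bigr)=1$ (Proposition \ref{p2}). Unfolding the iteration gives
\[\slength(I)\le 1+d_1\bigl(1+d_2+d_2d_3+\cdots+d_2d_3\cdots d_{m-1}\bigr)=1+d_1+d_1d_2+\cdots+d_1d_2\cdots d_{m-1},\]
which is the claimed bound. The ordering $d_1\le d_2\le\cdots\le d_m$ enters only in that the largest degree $d_m$ is the one that drops out of the telescoping sum, so this bound is the tightest the method produces.

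To prove $(\ast)$, I would induct on $d_1=\deg(u_1)$. Because $I$ is a monomial complete intersection, $\supp(u_1)$ is disjoint from $\supp(u_i)$ for every $i\ge 2$, so after relabelling the variables I may pick any $x_n\in\supp(u_1)$ and conclude $x_n\nmid u_i$ for all $2\le i\le m$. In the base case $d_1=1$ we have $u_1=x_n$ and $I=\bigl((u_2,\ldots,u_m),x_n\bigr)$; Lemma \ref{lem2} applied in $S=K[x_1,\ldots,x_{n-1}][x_n]$ gives $\slength(I)=1+\slength\bigl((u_2,\ldots,u_m)\text{ in }K[x_1,\ldots,x_{n-1}]\bigr)$, and Proposition \ref{p5} identifies the latter with $\slength\bigl((u_2,\ldots,u_m)\bigr)$. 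In the inductive step $d_1\ge 2$, Lemma \ref{lem} yields
\[\slength(I)\le\slength\bigl((u_1/x_n,u_2,\ldots,u_m)\bigr)+\slength\bigl((u_2,\ldots,u_m)\text{ in }K[x_1,\ldots,x_{n-1}]\bigr).\]
The first summand is the Stanley length of a fresh monomial complete intersection (disjointness of supports is preserved, since $\supp(u_1/x_n)\subset\supp(u_1)$) whose first generator has degree $d_1-1$, so the induction hypothesis bounds it by $1+(d_1-1)\cdot\slength\bigl((u_2,\ldots,u_m)\bigr)$. The second summand equals $\slength\bigl((u_2,\ldots,u_m)\bigr)$ by Proposition \ref{p5}. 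Summing produces $(\ast)$.

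The outer induction on $m$ is then routine: the base case $m=1$ is Proposition \ref{p2}, and for $m\ge 2$, inequality $(\ast)$ combined with the induction hypothesis applied to the $(m-1)$-generator complete intersection $(u_2,\ldots,u_m)$ (whose degrees $d_2\le\cdots\le d_m$ still satisfy the ordering hypothesis) closes the argument. The only delicate point in the whole proof is the bookkeeping of ambient polynomial rings: after each peeling step the trimmed ideal $(u_2,\ldots,u_m)$ is handed to us as an ideal in $K[x_1,\ldots,x_{n-1}]$ rather than in $S$, and I use Proposition \ref{p5} repeatedly to identify its Stanley length with that of the ideal generated by the same monomials in any intermediate polynomial ring. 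Modulo this bookkeeping, the argument is a transparent double induction on $m$ and on $d_1$.
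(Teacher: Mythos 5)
Your proof is correct and follows essentially the same route as the paper: both arguments rest on Lemma \ref{lem} (splitting off one variable of $u_1$) and Lemma \ref{lem2} (the case $d_1=1$), organized as a double induction on $m$ and on $d_1$. The only difference is presentational—you isolate the one-step recursion $\slength(I)\le 1+d_1\,\slength((u_2,\ldots,u_m))$ as an intermediate inequality, whereas the paper carries the explicit bound $\Phi_m$ through the induction; your remark about tracking the ambient ring via Proposition \ref{p5} is a detail the paper leaves implicit.
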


\begin{proof}
We denote $\Phi_m(d_1,d_2,\ldots,d_m)=1+d_1+d_1d_2+\cdots+d_1d_2\cdots d_{m-1}$. 
Our aim is to prove that $\slength(I)\leq \Phi_m(d_1,d_2,\ldots,d_m)$.
We proceed by induction on $m$. If $m=1$, that is $I=(u_1)$, then 
$\slength(I)=1\leq \Phi_1(d_1)=1$, as required. 

Now, assume $m\geq 2$. Without any loss of generality, we can assume that $x_n\mid u_1$. Since $I$ is a
monomial complete intersection, it follows that $x_n\nmid u_j$, for all $2\leq j\leq m$. We consider two cases:
\begin{enumerate}
\item[(i)] $d_1=1$. We have that $u_1=x_n$ and $u_2,\ldots,u_m\in S'=K[x_1,\ldots,x_{n-1}]$. Let $I'=(u_2,\ldots,u_m)\subset S'$. From Lemma \ref{lem2}
it follows that
$$\slength(I) \leq 1+\Phi_{m-1}(d_2,\ldots,d_m)\leq 1+ d_2 + d_2d_3 + \cdots + d_2d_3\cdots d_{m-1},$$
as required. 
\item[(ii)] $d_1\geq 2$. 
Let $u'_1:=u_1/x_n$. We consider
the ideals $I'=(u'_1,u_2,\ldots,u_m)\subset S$ and $I''=(u_2,\ldots,u_m)\subset K[x_1,\ldots,x_{n-1}]$. 
Note that $I'$ and $I''$ are monomial complete intersections.
Hence, from Lemma \ref{lem} and the induction hypothesis, it follows that
\begin{align*}
\slength(I) &\leq  \slength(I')+\slength(I'')  \\
 & \leq \Phi_{m}(d_1-1,d_2,\ldots,d_m)+\Phi_{m-1}(d_2,\ldots,d_m) \\
& = 1+(d_1-1)(1+d_2+ \cdots + d_2\cdots d_{m-1}) \\
& \hspace{10pt} + (1+d_2+ \cdots + d_2\cdots d_{m-1}) \\
& = 1 + d_1 +d_1d_2 + \cdots + d_1d_2 \cdots d_{m-1}, 
\end{align*}
as required. 
\end{enumerate} Hence, the proof is complete.
\end{proof}

In general, this bound is not optimal, as the following proposition shows:

\begin{prop}\label{p-trei}
Let $I \subset S$ be a monomial complete intersection, minimally generated by $3$ monomials of degrees $d_1$, $d_2$ and $d_3$. 
Then $$\slength(I)\leq d_1+d_2+d_3+1.$$
\end{prop}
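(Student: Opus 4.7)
The plan is to construct a Stanley decomposition of $I$ of length at most $n+1$ by combining Lemma \ref{lem}, Lemma \ref{lem2}, and Theorem \ref{t-doi}, proceeding by induction on $n$. By Proposition \ref{p5}, adjoining variables not appearing in any $\supp(u_i)$ leaves $\slength(I)$ unchanged, so I may assume $\supp(u_1)\cup\supp(u_2)\cup\supp(u_3)=\{x_1,\ldots,x_n\}$; since $\gcd(u_i,u_j)=1$ for $i\neq j$, the three supports partition the variable set and $|\supp(u_1)|+|\supp(u_2)|+|\supp(u_3)|=n$.

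In the base case ($n=3$, so each $|\supp(u_i)|=1$ and $I=(x_1^{a_1},x_2^{a_2},x_3^{a_3})$), I would exhibit an explicit Stanley decomposition of length at most $4$. For the inductive step, relabel so $x_n\in\supp(u_1)$. If $u_1=x_n$, then Lemma \ref{lem2} gives $\slength(I)=\slength((u_2,u_3)_{K[x_1,\ldots,x_{n-1}]})+1$, and Theorem \ref{t-doi} evaluates the right-hand side as $\min(\deg u_2,\deg u_3)+2$; using the partition condition $|\supp(u_2)|+|\supp(u_3)|=n-1$, this quantity is at most $n+1$. Otherwise $|\supp(u_1)|\geq 2$, and Lemma \ref{lem} yields $\slength(I)\leq\slength(I')+\slength(I'')$ with $I'=(u_1/x_n,u_2,u_3)\subset S$ still a $3$-generator monomial complete intersection (with a strictly smaller $u_1$) and $I''=(u_2,u_3)\subset K[x_1,\ldots,x_{n-1}]$ controlled by Theorem \ref{t-doi}; a secondary induction on $\deg u_1$ (or on $|\supp(u_1)|$) then handles $\slength(I')$.

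The main obstacle is that the additive bound $\slength(I')+\slength(I'')$ produced by Lemma \ref{lem} can overshoot $n+1$, so closing the induction requires a refined strategy. The natural remedy is to forgo the crude splitting $I = I'' \oplus x_n I'$ and instead peel off a single Stanley space $u_j K[x_1,\ldots,x_n]$ for a judicious choice of $j$, then decompose the remainder $(\{u_k\mid\cdot\}\cup\{u_\ell\mid\cdot\})\setminus u_jK[x_1,\ldots,x_n]$ by stratifying according to which variable of $\supp(u_j)$ first has exponent zero, in the spirit of the explicit decomposition witnessing Theorem \ref{t-doi}. Combining this stratification with Theorem \ref{t-doi} applied to the resulting $2$-generator substructure on each stratum should give a total count linear in $n$, using that $\sum_i|\supp(u_i)|=n$ to collect the contributions into the desired $n+1$ bound.
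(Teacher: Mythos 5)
There is a fatal gap, and it is already visible in your base case. For $n=3$ you need every ideal $(x_1^{a_1},x_2^{a_2},x_3^{a_3})\subset K[x_1,x_2,x_3]$ to admit a Stanley decomposition of length at most $4$, but you never exhibit one, and none exists once the exponents grow: for $I=(x_1^{10},x_2^{10},x_3)$, Lemma \ref{lem2} gives $\slength(I)=\slength\bigl((x_1^{10},x_2^{10})K[x_1,x_2]\bigr)+1$, and Theorem \ref{p28} (equivalently Theorem \ref{t-doi}) evaluates this as $11+1=12$, far above $n+1=4$. The same example exposes the error in your $u_1=x_n$ branch: from $|\supp(u_2)|+|\supp(u_3)|=n-1$ you conclude $\min(\deg u_2,\deg u_3)+2\le n+1$, but the partition of the variables only gives $\deg u_i\ge |\supp(u_i)|$, i.e.\ it bounds the degrees from below, not from above, so the inequality goes the wrong way. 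Finally, the ``refined strategy'' in your closing paragraph is a description of a hope rather than an argument; since the quantity that actually controls the length of such decompositions is $1+\deg u_1+\deg u_2+\deg u_3$ rather than $n+1$, no stratification can close the induction for non-squarefree ideals.

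For comparison, the paper's own route is entirely different: it reduces via polarization (Theorem \ref{t1}) to $J=(x_1^{d_1},x_2^{d_2},x_3^{d_3})$ and writes down an explicit four-block decomposition of $J$ of length $1+d_1+d_2+d_3$, which it then labels $n+1$. That label is only accurate when $n=d_1+d_2+d_3$, essentially the squarefree situation in which every variable occurs in exactly one generator with exponent one; there your identity $\sum_i|\supp(u_i)|=n$ coincides with $\sum_i\deg(u_i)=n$ and the bound is genuine. So the statement you are trying to prove, read literally for arbitrary monomial complete intersections, is contradicted by the example above using the paper's own Lemma \ref{lem2} and Theorem \ref{p28}; what is actually provable is $\slength(I)\le 1+\deg(u_1)+\deg(u_2)+\deg(u_3)$, and in the squarefree case the explicit decomposition displayed in the proof of Proposition \ref{p-trei} gives the $n+1$ bound directly, with no induction needed.
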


\begin{proof}
Since $I$ is a monomial complete intersection, it follows that $I$ can be obtained by applying several partial polarizations to
the ideal $J=(x_1^{d_1},x_2^{d_2},x_3^{d_3})\subset S=K[x_1,x_2,x_3]$. Therefore, from Theorem \ref{t1}, we can assume that $I=J$.
On the other hand, $I$ has the following Stanley decomposition
\begin{align*}
\mathcal D\;:\;I=& x_1^{d_1}x_2^{d_2}x_3^{d_3}K[x_1,x_2,x_3]
                   \oplus \bigoplus_{i_1=0}^{d_1-1}x_1^{i_1}x_2^{d_2}K[x_2,x_3] \\
									 & \oplus \bigoplus_{i_2=0}^{d_2-1}x_2^{i_2}x_3^{d_3}K[x_1,x_3]
									 \oplus \bigoplus_{i_3=0}^{d_3-1}x_3^{i_3}x_1^{d_1} K[x_1,x_2],
\end{align*}									
with $\slength(\mathcal D)=d_1+d_2+d_3+1$. Thus, the proof is complete.
\end{proof}

\begin{exm}\label{ext4}
Let $I=(x_1^3,x_2^2x_3^2,x_4x_5^3)\subset S=K[x_1,\ldots,x_5]$. According to Theorem \ref{t4}, we have that
$$\slength(I)\leq 1+3+3\cdot 4 = 16.$$
On the other hand, according to Proposition \ref{p-trei}, we have 
$$\slength(I)\leq 3+4+4+1=12.$$
\end{exm}


\section{The squarefree case}

Let $[n]=\{1,2,\ldots,n\}$. For a subset $F\subset [n]$, we denote $x_F=\prod\limits_{j\in F}x_j$.
Let $I\subset S$ be a squarefree monomial ideal. The Stanley-Reisner simplicial complex associated to $S/I$ is
$$\Delta(S/I):=\{F\subset [n]\;:\;x_F\notin I\}.$$
Let $0\subset I\subsetneq J\subset S$ be two squarefree monomial ideals. The relative Stanley-Reisner simplicial complex associated to $J/I$ is
$$\Delta(J/I):=\Delta(S/I)\setminus\Delta(S/J)=\{F\subset [n]\;:\;x_F\in J\setminus I\}.$$
An element $F\in\Delta(J/I)$, maximal with respect to inclusion, is called a facet.

Given $F\subset G\subset [n]$, the set $[F,G]=\{H\;:\;F\subset H\subset G\}$ is called an interval.

An interval partition of $\Delta(J/I)$ is a decomposition
$$\mathcal P\;:\;\Delta(J/I)=\bigcup_{i=1}^r [F_i,G_i],$$
of $\Delta(J/I)$, as a disjoint union of intervals.

For a monomial $u\in S$, the support of $u$, denoted by $\supp(u)$, is the set 
$$\supp(u)=\{x_j\;:\;x_j\mid u\}.$$
A Stanley decomposition $$\mathcal D\;:\; J/I=\bigoplus\limits_{i=1}^r u_iK[Z_i],$$ of $J/I$ is called squarefree,
 if $u_i$'s are squarefree
and $\supp(u_i)\subset Z_i$, for all $1\leq i\leq r$.

As a particular case of \cite[Theorem 2.1]{hvz}, we have the following:

\begin{teor}\label{sqt}
Let $0\subset I\subsetneq J\subset S$ be two squarefree monomial ideals.
\begin{enumerate}
\item[(1)] Let $\mathcal P\;:\;\Delta(J/I)=\bigcup\limits_{i=1}^r [F_i,G_i]$ be an interval partition of $\Delta(J/I)$. 
           Let $u_i=x_{F_i}$ and $Z_i=\supp(x_{G_i})$, for $1\leq i\leq r$. Then $$\mathcal D\;:\;J/I=\bigoplus_{i=1}^r u_i K[Z_i],$$
					 is a squarefree Stanley decomposition of $J/I$.
\item[(2)] Let $\mathcal D\;:\;J/I=\bigoplus\limits_{i=1}^r u_i K[Z_i]$ be an arbitrary Stanley decomposition of $J/I$. Then
           $$\mathcal D'\;:\;J/I= \bigoplus_{u_i\text{ is squarefree}}u_i K[Z_i\cup\supp(u_i)],$$
					 is a squarefree Stanley decomposition of $J/I$. 
\end{enumerate}
\end{teor}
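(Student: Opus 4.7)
The plan is to prove (1) from first principles by a direct bijection argument between monomials of $J \setminus I$ and members of the interval partition, and then to deduce (2) by extracting an interval partition of $\Delta(J/I)$ out of the squarefree members of $\mathcal{D}$, thereby reducing (2) to (1).

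For (1), the key observation is that for any monomial $v \in S$ and any Stanley subspace $x_F K[G]$ with $F \subset G \subset [n]$, one has $v \in x_F K[G]$ if and only if $F \subset \supp(v) \subset G$. Moreover, since $I$ and $J$ are squarefree, a monomial $v$ satisfies $v \in J \setminus I$ if and only if $\supp(v) \in \Delta(J/I)$; indeed, a squarefree generator of $I$ or $J$ divides $v$ exactly when its support is contained in $\supp(v)$. Combining these, the identity $J/I = \bigoplus_{i=1}^r x_{F_i} K[G_i]$ as $K$-vector spaces reduces to the statement that every face $\supp(v) \in \Delta(J/I)$ lies in exactly one interval $[F_i,G_i]$, which is precisely the hypothesis. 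Squarefreeness of $\mathcal{D}$ is immediate from $\supp(u_i)=F_i\subset G_i=Z_i$, and each $u_i K[Z_i]$ is a free $K[Z_i]$-module because multiplication by $u_i$ is injective on monomials.

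For (2), I would set $F_j=\supp(u_j)$ and $G_j=Z_j\cup\supp(u_j)$ for each index $j$ with $u_j$ squarefree, and show that
$$ \mathcal{P}\;:\;\Delta(J/I) = \bigcup_{u_j \text{ squarefree}} [F_j, G_j] $$
is an interval partition. Given $F \in \Delta(J/I)$, the squarefree monomial $x_F \in J\setminus I$ lies in a unique summand $u_j K[Z_j]$ of $\mathcal{D}$; because $u_j$ divides the squarefree monomial $x_F$, the generator $u_j$ must itself be squarefree, and writing $x_F = u_j \cdot w$ with $w \in K[Z_j]$ yields $\supp(u_j)\subset F = \supp(u_j) \cup \supp(w) \subset \supp(u_j) \cup Z_j = G_j$, so $F \in [F_j,G_j]$. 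Disjointness is inherited from directness of $\mathcal{D}$: if $F$ lay in two such intervals, then $x_F$ would lie in two Stanley subspaces of $\mathcal{D}$. Applying part (1) to the partition $\mathcal{P}$ yields the decomposition $\mathcal{D}' = \bigoplus_{u_j \text{ squarefree}} u_j K[Z_j \cup \supp(u_j)]$, as claimed.

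The main obstacle is conceptual rather than technical: one must recognize that the squarefree monomials of $J\setminus I$ alone already determine the full Stanley structure via the combinatorics of $\Delta(J/I)$, so that non-squarefree $u_i$'s can be discarded without loss, while the enlargement $Z_j \mapsto Z_j \cup \supp(u_j)$ is exactly what is needed to absorb the non-squarefree monomials formerly covered by the discarded pieces. Once everything is funneled through (1), the directness and completeness demanded by (2) require no separate verification.
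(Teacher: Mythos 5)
Your proof is correct and is essentially the standard argument: the paper itself gives no proof but simply invokes \cite[Theorem 2.1]{hvz}, whose content is exactly the correspondence you verify, namely that $v\in x_FK[G]$ iff $F\subset\supp(v)\subset G$ and that membership of a monomial in a squarefree ideal depends only on its support, so Stanley decompositions of $J/I$ with squarefree generators correspond to interval partitions of $\Delta(J/I)$. Your reduction of (2) to (1) by extracting the intervals $[\supp(u_j),\,Z_j\cup\supp(u_j)]$ from the squarefree summands, with disjointness inherited from directness of $\mathcal D$, is complete and needs no repair.
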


\begin{cor}\label{c1}
Let $0\subset I\subsetneq J\subset S$ be two squarefree monomial ideals. Then:
\begin{enumerate}
\item[(1)] There exists a squarefree Stanley decomposition $\mathcal D$ of $J/I$ with $\sdepth(J/I)=\sdepth(\mathcal D)$.
\item[(2)] There exists a squarefree Stanley decomposition $\mathcal D$ of $J/I$ with $\slength(J/I)=\slength(\mathcal D)$.
\end{enumerate}
\end{cor}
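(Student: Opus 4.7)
The plan is to derive both parts of the corollary as essentially immediate consequences of Theorem \ref{sqt}(2), which converts any Stanley decomposition of $J/I$ into a squarefree one by discarding the summands $u_iK[Z_i]$ with $u_i$ non-squarefree and enlarging the remaining $Z_i$ to $Z_i\cup\supp(u_i)$. The key observation is that this conversion can only shrink the length and only increase the minimum $|Z_i|$, so applying it to an optimal decomposition preserves optimality.

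For part (1), I would start with a Stanley decomposition $\mathcal D\;:\;J/I=\bigoplus_{i=1}^{r}u_iK[Z_i]$ realizing $\sdepth(\mathcal D)=\sdepth(J/I)$. Applying Theorem \ref{sqt}(2) yields a squarefree Stanley decomposition
$$\mathcal D'\;:\;J/I=\bigoplus_{u_i\text{ squarefree}}u_iK[Z_i\cup\supp(u_i)].$$
Since $|Z_i\cup\supp(u_i)|\geq |Z_i|$ for every $i$ surviving in $\mathcal D'$, the minimum of the sizes can only go up, so $\sdepth(\mathcal D')\geq \sdepth(\mathcal D)=\sdepth(J/I)$. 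The reverse inequality holds by definition of $\sdepth$, hence $\sdepth(\mathcal D')=\sdepth(J/I)$ and $\mathcal D'$ is the required squarefree decomposition.

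For part (2), I would apply the same recipe, but this time starting from a Stanley decomposition $\mathcal D$ achieving $\slength(\mathcal D)=\slength(J/I)$. The resulting squarefree decomposition $\mathcal D'$ has length equal to the number of indices $i$ with $u_i$ squarefree, hence $\slength(\mathcal D')\leq \slength(\mathcal D)=\slength(J/I)$. The reverse inequality is immediate from the definition of $\slength$, so $\slength(\mathcal D')=\slength(J/I)$, as required.

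There is no real obstacle here: the entire content sits inside Theorem \ref{sqt}(2). The only thing to check is that the operation of the theorem behaves monotonically with respect to each of the two invariants, which is what the two short arguments above verify. The mild asymmetry in the two proofs, namely that $\sdepth$ uses the enlargement of $Z_i$ while $\slength$ uses the discarding of summands, is worth stating explicitly so the reader sees why the single construction of Theorem \ref{sqt}(2) handles both cases simultaneously.
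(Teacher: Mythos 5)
Your proposal is correct and takes exactly the same route as the paper: apply Theorem \ref{sqt}(2) to a decomposition that is optimal for the relevant invariant, and observe that the conversion satisfies $\sdepth(\mathcal D')\geq \sdepth(\mathcal D)$ and $\slength(\mathcal D')\leq \slength(\mathcal D)$. The paper states these two inequalities in one line and concludes; you have merely spelled out the same argument in more detail.
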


\begin{proof}
In the statement (2) of Theorem \ref{sqt}, we have that
$$\sdepth(\mathcal D')\geq \sdepth(\mathcal D)\text{ and }\slength(\mathcal D')\leq \slength(\mathcal D).$$
Hence, we get the required conclusions.
\end{proof}

We denote $2^{[n]}$, the family of all subsets of $[n]$. Let $P\subset 2^{[n]}$. An interval partition of $P$ is a
disjoint union
$$\mathcal P\;:\;P=\bigcup_{i=1}^r [F_i,G_i],$$
where $F_i\subset G_i\subset [n]$. We define the Stanley length of $\mathcal P$ to be $\slength(\mathcal P)=r$.
Also, the Stanley length of $P$ is
$$\slength(P):=\min\{\slength(\mathcal P)\;:\;\mathcal P\text{ is an interval partition of }P\}.$$
It is clear that $\slength(P)=1$ if and only if $P$ is a set interval.

As a direct consequence of Corollary \ref{c1}, we have:

\begin{prop}\label{propi}
Let $0\subset I\subsetneq J\subset S$ be two squarefree monomial ideals. Then
$$\slength(J/I)=\slength(\Delta(J/I)).$$
\end{prop}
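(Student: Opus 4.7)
The plan is to exploit the length-preserving correspondence between squarefree Stanley decompositions of $J/I$ and interval partitions of $\Delta(J/I)$ established by Theorem \ref{sqt}, and then invoke Corollary \ref{c1}(2) to argue that the minimization over all Stanley decompositions may be restricted to the squarefree ones without loss.

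For the inequality $\slength(J/I)\leq \slength(\Delta(J/I))$, I would start from an interval partition $\mathcal P\;:\;\Delta(J/I)=\bigcup_{i=1}^r [F_i,G_i]$ realizing $r=\slength(\Delta(J/I))$. Applying Theorem \ref{sqt}(1) with $u_i=x_{F_i}$ and $Z_i=\supp(x_{G_i})$ produces a squarefree Stanley decomposition of $J/I$ of length exactly $r$, yielding $\slength(J/I)\leq r$.

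For the reverse inequality $\slength(\Delta(J/I))\leq \slength(J/I)$, I would use Corollary \ref{c1}(2) to choose a squarefree Stanley decomposition $\mathcal D\;:\;J/I=\bigoplus_{i=1}^r u_iK[Z_i]$ with $r=\slength(J/I)$. Since each $u_i$ is squarefree and $\supp(u_i)\subset Z_i$, I set $F_i=\supp(u_i)$ and $G_i=Z_i$. The key point to verify is that $\Delta(J/I)=\bigcup_{i=1}^r [F_i,G_i]$ is a disjoint union: for any squarefree monomial $x_H\in J\setminus I$ the element $x_H$ belongs to a unique summand $u_iK[Z_i]$, and membership $x_H\in u_iK[Z_i]$ is equivalent to $F_i\subset H\subset G_i$. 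Thus the intervals $[F_i,G_i]$ partition $\Delta(J/I)$, giving $\slength(\Delta(J/I))\leq r$.

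The proposition is labeled as a direct consequence of Corollary \ref{c1}, and indeed there is no genuine obstacle: the only nontrivial content is Theorem \ref{sqt} together with the observation that a Stanley decomposition of minimal length can be chosen squarefree. I would keep the write-up short and simply chain the two inequalities together.
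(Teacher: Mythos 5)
Your argument is correct and is exactly the route the paper intends: the paper gives no separate proof, merely calling the proposition ``a direct consequence of Corollary \ref{c1},'' and your write-up supplies the two inequalities via Theorem \ref{sqt}(1) in one direction and Corollary \ref{c1}(2) plus the interval-partition reading of a squarefree decomposition in the other. Your explicit check that $x_H\in u_iK[Z_i]$ is equivalent to $F_i\subset H\subset G_i$ is the only detail the paper leaves implicit, and it is verified correctly.
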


The following proposition, gives a trivial upper bound for the Stanley length of a quotient of two squarefree monomial ideals:

\begin{prop}\label{p33}
Let $0\subset I\subsetneq J\subset S$ be two squarefree monomial ideals. Then
$$\slength(J/I)\leq |\Delta(J/I)|.$$
\end{prop}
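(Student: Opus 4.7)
The plan is to exhibit an explicit Stanley decomposition of $J/I$ whose length is exactly $|\Delta(J/I)|$, which will immediately give the desired upper bound. The natural candidate is the one coming from the trivial interval partition of $\Delta(J/I)$ into singletons.

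First I would observe that for every face $F\in\Delta(J/I)$, the set $\{F\}=[F,F]$ is itself an interval. Hence
$$\mathcal P\;:\;\Delta(J/I) = \bigcup_{F\in\Delta(J/I)}[F,F]$$
is a (tautological) interval partition of $\Delta(J/I)$, whose length equals $|\Delta(J/I)|$. This is well-defined because $\Delta(J/I)$ is a finite collection of subsets of $[n]$.

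Next I would apply Theorem \ref{sqt}(1) to this interval partition. With $F_i=G_i=F$, the theorem assigns to each singleton interval the Stanley subspace $x_F K[F]$, giving the squarefree Stanley decomposition
$$\mathcal D\;:\;J/I=\bigoplus_{F\in\Delta(J/I)}x_F\,K[F].$$
Since $J$ and $I$ are squarefree, a monomial $u\in S$ lies in $J\setminus I$ precisely when $x_{\supp(u)}\in J\setminus I$, i.e.\ when $\supp(u)\in\Delta(J/I)$; this confirms (or one may invoke Theorem \ref{sqt}(1) directly) that $\mathcal D$ is indeed a Stanley decomposition. By construction, $\slength(\mathcal D)=|\Delta(J/I)|$, and therefore
$$\slength(J/I)\leq\slength(\mathcal D)=|\Delta(J/I)|.$$

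There is no significant obstacle here: the statement is a trivial upper bound coming from the fact that finest possible interval partition simply lists the elements of $\Delta(J/I)$ one by one. Alternatively, one could first invoke Proposition \ref{propi} to reduce to showing $\slength(\Delta(J/I))\leq|\Delta(J/I)|$, which is immediate from the same singleton partition.
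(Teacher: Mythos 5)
Your proof is correct and follows essentially the same route as the paper: both use the trivial interval partition $\Delta(J/I)=\bigcup_{F\in\Delta(J/I)}[F,F]$ together with Theorem \ref{sqt}(1) (equivalently Proposition \ref{propi}) to produce a Stanley decomposition of length $|\Delta(J/I)|$. No issues.
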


\begin{proof}
It follows from Corollary \ref{c1}, Proposition \ref{propi} and the trivial partition $$\Delta(J/I)=\bigcup\limits_{F\in \Delta(J/I)}[F,F].$$
\end{proof}

However, one can give a better upper bound, if the Stanley depth is known:

\begin{prop}\label{p34}
Let $0\subset I\subsetneq J\subset S$ be two squarefree monomial ideals, such that $\sdepth(J/I)=s$. Then
$$\slength(J/I)\leq |\{F\in \Delta(J/I)\;:\;|F|\geq s\}|.$$
\end{prop}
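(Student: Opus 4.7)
The plan is to convert a Stanley decomposition of $J/I$ that realizes the Stanley depth into an interval partition of $\Delta(J/I)$ via the correspondence of Theorem~\ref{sqt}, and then to inject its intervals into the set of faces of size at least $s$ by sending each interval to its top element. This will bound the length of that particular decomposition, and hence $\slength(J/I)$, by the desired cardinality.

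Concretely, I would first invoke Corollary~\ref{c1}(1) to choose a squarefree Stanley decomposition $\mathcal D:J/I=\bigoplus_{i=1}^r u_iK[Z_i]$ with $\sdepth(\mathcal D)=\sdepth(J/I)=s$; in particular $\supp(u_i)\subset Z_i$ and $|Z_i|\geq s$ for every $i$. The correspondence underlying Theorem~\ref{sqt} then translates $\mathcal D$ into an interval partition
$$\Delta(J/I)=\bigcup_{i=1}^r [F_i,G_i],$$
where $F_i=\supp(u_i)$ and $G_i$ is the subset of $[n]$ indexing $Z_i$, so that $|G_i|=|Z_i|\geq s$.

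The core of the argument is to show that the assignment $i\mapsto G_i$ is an injection from $\{1,\ldots,r\}$ into $\{F\in\Delta(J/I):|F|\geq s\}$. Each $G_i$ lies in $\Delta(J/I)$ as the top of an interval inside $\Delta(J/I)$, and has $|G_i|\geq s$. For injectivity, suppose $G_i=G_j$ with $i\neq j$: then $G_i\in [F_i,G_i]$ and also $G_i=G_j\in [F_j,G_j]$ (since $F_j\subset G_j=G_i$), contradicting the disjointness of the partition. It follows that $r\leq |\{F\in\Delta(J/I):|F|\geq s\}|$, and since $\slength(J/I)\leq\slength(\mathcal D)=r$, the claimed inequality is obtained.

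I do not anticipate any serious obstacle here; the only subtlety worth flagging (and it is precisely the content of Remark~\ref{o44}) is that the decomposition $\mathcal D$ realizing the Stanley depth need not simultaneously realize the Stanley length, which is why the argument produces an inequality rather than an equality.
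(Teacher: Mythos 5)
Your proof is correct and follows essentially the same route as the paper: take a squarefree Stanley decomposition realizing $\sdepth(J/I)=s$ via Corollary~\ref{c1}, pass to the corresponding interval partition with all $|G_i|\geq s$, and conclude via Proposition~\ref{propi}. The only difference is that you spell out the injectivity of $i\mapsto G_i$ (which the paper leaves implicit), and that detail is exactly right.
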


\begin{proof}
Since $\sdepth(J/I)=s$, from Corollary \ref{c1}, there exists a partition 
$$\mathcal P\;:\;\Delta(J/I)=\bigcup\limits_{i=1}^r[F_i,G_i]$$ of $\Delta(J/I)$ with $|G_i|\geq s$, 
for all $1\leq i\leq r$. Hence, the conclusion follows form Proposition \ref{propi}.
\end{proof}

\begin{prop}\label{p35}
Let $0\subset I\subsetneq J\subset S$ be two squarefree monomial ideals and let $\Delta=\Delta(J/I)$.
Then $\slength(J/I)$ is greater or equal to the number of facets of $\Delta$.
\end{prop}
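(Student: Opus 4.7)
The plan is to reduce the statement to a combinatorial fact about interval partitions and then observe that facets of $\Delta$ must land in distinct intervals.

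First I would invoke Corollary \ref{c1} together with Proposition \ref{propi} to rewrite the quantity of interest as
\[
\slength(J/I) = \slength(\Delta(J/I)) = \min\Bigl\{r : \Delta = \bigcup_{i=1}^{r}[F_i,G_i]\text{ a disjoint interval partition}\Bigr\}.
\]
So it suffices to show that in any interval partition $\mathcal P\colon \Delta = \bigcup_{i=1}^{r}[F_i,G_i]$, the number of intervals $r$ is at least the number of facets of $\Delta$.

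Next I would fix such a partition and argue that the map which sends each facet of $\Delta$ to the unique interval containing it is injective. Let $F$ be a facet of $\Delta$, i.e.\ a maximal element of $\Delta$ with respect to inclusion. By disjointness, there exists a unique index $i(F)\in\{1,\dots,r\}$ with $F\in[F_{i(F)},G_{i(F)}]$, so in particular $F\subseteq G_{i(F)}$. But $G_{i(F)}\in \Delta$ and $F$ is maximal in $\Delta$, so the inclusion $F\subseteq G_{i(F)}$ forces $F = G_{i(F)}$. Since each interval $[F_i,G_i]$ has a unique maximal element $G_i$, two distinct facets $F\neq F'$ cannot share the same interval; that is, $i(F)\neq i(F')$.

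Therefore $F\mapsto i(F)$ injects the set of facets of $\Delta$ into $\{1,\dots,r\}$, giving $r\geq $ (number of facets). Taking the minimum over all partitions yields $\slength(\Delta(J/I))\geq$ (number of facets of $\Delta$), which by the first paragraph is exactly the desired inequality. I do not foresee a serious obstacle here; the only subtle point is to notice that $G_{i(F)}$ itself must lie in $\Delta$ (which holds by definition of an interval partition of $\Delta$), so that maximality of $F$ inside $\Delta$ can be applied to conclude $F=G_{i(F)}$.
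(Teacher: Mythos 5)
Your proposal is correct and follows essentially the same route as the paper: reduce via Proposition \ref{propi} to interval partitions of $\Delta$ and observe that every facet must coincide with one of the $G_i$, forcing $r$ to be at least the number of facets. Your write-up merely makes explicit the injectivity of the assignment $F\mapsto i(F)$, which the paper leaves implicit.
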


\begin{proof}
Give an interval partition of $\Delta$, i.e. $\Delta=\bigcup\limits_{i=1}^r [F_i,G_i]$, the set of facets of 
$\Delta$ is included in $\{G_1,\ldots,G_r\}$. Hence $r$ is greater or equal to the number of facets of $\Delta$.
The conclusion follows from Proposition \ref{propi}.
\end{proof}

Let $I\subset S$ be a squarefree monomial ideal and suppose that $I$ has the irredundant
primary decomposition $$I=P_1\cap P_2\cap \cdots \cap P_m,$$
where $P_i$'s are prime monomial ideals, that is $P_i$'s are generated by subsets of variables.

The $k$-th symbolic power of $I$, denoted by $I^{(k)}$, is defined to be
$$I^{(k)}=P_1^k\cap P_2^k\cap \cdots \cap P_m^k.$$
We have the following result:

\begin{prop}\label{p36}
Let $0\subset I\subsetneq J\subset S$ be two squarefree monomial ideals. Then, for every pair of integers $k,s\geq 1$, we have
$$ \slength(J^{(s)}/I^{(s)}) \leq \slength(J^{(ks)}/I^{(ks)}). $$
\end{prop}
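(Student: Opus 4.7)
The plan is to apply Theorem \ref{sey} with the power map $\Phi:\Mon(S)\to\Mon(S)$ defined by $\Phi(u)=u^k$, taking $I_1=I^{(s)}$, $J_1=J^{(s)}$, $I_2=I^{(ks)}$, $J_2=J^{(ks)}$. Since $I,J$ are squarefree, their primary decompositions $I=P_1\cap\cdots\cap P_m$ and $J=Q_1\cap\cdots\cap Q_{\ell}$ consist of monomial prime ideals, and the symbolic powers are $I^{(s)}=\bigcap_i P_i^s$, $J^{(s)}=\bigcap_j Q_j^s$ (and likewise at level $ks$).

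The main computational step is to verify conditions (i), (ii), and (iii) of Theorem \ref{sey}. For a monomial prime $P=(x_{j_1},\ldots,x_{j_t})$ and a monomial $u$, membership $u\in P^s$ is equivalent to $\sum_{x\in P}\deg_x(u)\ge s$. Since $\deg_x(u^k)=k\deg_x(u)$, we see that $u^k\in P^{ks}$ iff $\sum_{x\in P}k\deg_x(u)\ge ks$ iff $u\in P^s$. Intersecting over all associated primes yields $u\in I^{(s)}\iff u^k\in I^{(ks)}$, which is condition (i); the identical argument with the primes of $J$ gives (ii).

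For condition (iii), I would observe that $v\in uK[Z]$ means exactly that $u\mid v$ and $\supp(v/u)\subset Z$. On the other hand $u^k\mid v^k$ iff $\deg_{x}(u)\le \deg_{x}(v)$ for every variable $x$, which is the same as $u\mid v$; and $\supp(v^k/u^k)=\supp((v/u)^k)=\supp(v/u)$. Hence $v\in uK[Z]$ iff $\Phi(v)\in\Phi(u)K[Z]$, establishing (iii). Applying Theorem \ref{sey} gives $\slength(J^{(s)}/I^{(s)})\le\slength(J^{(ks)}/I^{(ks)})$, as required.

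The only place where one has to be a bit careful is that the symbolic power is defined via the \emph{minimal} primes (the associated primes are all minimal when $I$ is squarefree), so the intersection description used in the verification of (i) and (ii) is legitimate; beyond that the argument is a routine check, parallel in spirit to Corollary \ref{c2}, which uses the same Theorem \ref{sey} with a different $\Phi$.
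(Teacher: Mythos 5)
Your proposal is correct and follows exactly the paper's approach: the paper's proof also applies Theorem \ref{sey} with $\Phi(u)=u^k$, citing the proof of \cite[Theorem 3.1]{seyed} for the verification of the hypotheses, which you have simply written out in full (correctly, via the criterion $u\in P^s \iff \sum_{x\in P}\deg_x(u)\geq s$ for a monomial prime $P$).
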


\begin{proof}
As in the proof of \cite[Theorem 3.1]{seyed}, we let $\Phi:\Mon(S)\to\Mon(s)$, $\Phi(u)=u^k$, and we apply Theorem \ref{sey}.
\end{proof}

\begin{prop}
Let $1\leq t\leq n$ be an integer. Let $I\subset S$ be a squarefree monomial ideal and assume that there exists a subset 
$A\subset \{x_1,\ldots,x_n\}$, such that, for any $P\in \Ass(S/I)$, we have $|P\cap A|=t$. Then, for any $k\geq 1$, we have
$$\slength(I^{(k)})\leq \slength(I^{(k+t)})\text{ and }\slength(S/I^{(k)})\leq \slength(S/I^{(k+t)}).$$
In particular, the conclusion holds, if $I$ is unmixted of height $d$.
\end{prop}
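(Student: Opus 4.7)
The strategy is to invoke Theorem~\ref{sey} with the map $\Phi:\Mon(S)\to\Mon(S)$ defined by $\Phi(u)=u\cdot x_A$, where $x_A=\prod_{x_j\in A}x_j$. This parallels the proof of Proposition~\ref{p36}, where the map was $u\mapsto u^k$.

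First, I would recall the standard description: for a prime monomial ideal $P$ generated by a subset of the variables and any monomial $u$, one has $u\in P^k$ if and only if the $P$-degree $\deg_P(u):=\sum_{x_j\in P}\deg_{x_j}(u)$ is at least $k$. Writing the irredundant decomposition $I=P_1\cap\cdots\cap P_m$, this yields
$$u\in I^{(k)}\iff \deg_{P_i}(u)\geq k\text{ for all }1\leq i\leq m.$$
The key point is that the hypothesis $|P_i\cap A|=t$ translates to $\deg_{P_i}(x_A)=t$ for every $i$, so
$$\deg_{P_i}(\Phi(u))=\deg_{P_i}(u)+t,$$
which proves that $u\in I^{(k)}$ iff $\Phi(u)\in I^{(k+t)}$. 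Condition (iii) of Theorem~\ref{sey} is immediate, since $v\in uK[Z]$ iff $v/u\in K[Z]$, and $\Phi(v)/\Phi(u)=v/u$; so $v\in uK[Z]$ iff $\Phi(v)\in\Phi(u)K[Z]$.

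With these verifications in hand, I would apply Theorem~\ref{sey} twice. Taking $(I_1,J_1)=(0,I^{(k)})$ and $(I_2,J_2)=(0,I^{(k+t)})$ gives the first inequality $\slength(I^{(k)})\leq\slength(I^{(k+t)})$. Taking $(I_1,J_1)=(I^{(k)},S)$ and $(I_2,J_2)=(I^{(k+t)},S)$, where condition (ii) is trivial because $J_1=J_2=S$ and condition (i) was just proved, gives $\slength(S/I^{(k)})\leq\slength(S/I^{(k+t)})$.

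For the last assertion, if $I$ is unmixed of height $d$ then every $P\in\Ass(S/I)$ is generated by exactly $d$ variables, and choosing $A=\{x_1,\ldots,x_n\}$ yields $|P\cap A|=d$ for every such $P$; the hypothesis then holds with $t=d$. I do not anticipate any serious obstacle here: the only subtle point is checking that the proof of Theorem~\ref{sey} remains valid when $I_1=I_2=0$ or $J_1=J_2=S$, which is clear from the way the argument is set up in terms of monomials of $J_i\setminus I_i$.
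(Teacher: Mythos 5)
Your proposal is correct and follows essentially the same route as the paper: the paper's proof simply cites the argument of Seyed Fakhari's Remark 3.3 (which is exactly the map $\Phi(u)=u\cdot x_A$) and applies Theorem~\ref{sey}, with $A=\{x_1,\ldots,x_n\}$ in the unmixed case. Your write-up just supplies the details the paper leaves to the reference, and the verifications (the $\deg_{P_i}$ computation and the degenerate cases $I_1=0$, $J_1=S$) are all sound.
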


\begin{proof}
We use the argument from \cite[Remark 3.3]{seyed} and we apply Theorem \ref{sey}.

If $I$ is unmixted of heigth $d$, then, $I=P_1\cap\cdots \cap P_m$ such that the heigth of $P_i$ is $d$, for all $1\leq i\leq d$. 
Hence, the initial hypothesis holds for $A=\{x_1,\ldots,x_n\}$. See also \cite[Theorem 3.7]{seyed}.
\end{proof}

\begin{teor}\label{t-doi}
Let $I\subset S$ be a monomial ideal, minimally generated by two monomials $u_1,u_2\in S$. Then:
$$\slength(I)=\min\{\deg(u_1),\deg(u_2)\}-\deg(\gcd(u_1,u_2))+1.$$
\end{teor}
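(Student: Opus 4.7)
Using Proposition \ref{p3} and the equality $I=g\cdot(u_1/g,u_2/g)$ with $g:=\gcd(u_1,u_2)$, the claim reduces to proving $\slength((w_1,w_2))=d+1$ whenever $\gcd(w_1,w_2)=1$ and $d:=\deg(w_1)\leq\deg(w_2)$. For the upper bound I apply Proposition \ref{p1} to the short exact sequence
\[0\to(w_1)\to(w_1,w_2)\to(w_1,w_2)/(w_1)\to 0.\]
Since $\gcd(w_1,w_2)=1$ forces $(w_1)\cap(w_2)=(w_1w_2)$, there is a multigraded isomorphism $(w_1,w_2)/(w_1)\cong(w_2)/(w_1w_2)\cong S/(w_1)$ up to a shift by $w_2$, so Proposition \ref{p7} gives $\slength\bigl((w_1,w_2)/(w_1)\bigr)=d$, and Proposition \ref{p1} produces $\slength((w_1,w_2))\leq 1+d$.

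For the lower bound I pass via Theorem \ref{t1} to the polarization $I^p=(v_1,v_2)$, which is squarefree and still minimally generated by two generators; a direct check using the coprimality of $u_1,u_2$ shows $\gcd(v_1,v_2)=1$ and $\deg(v_i)=\deg(u_i)$, because no variable $x_j$ divides both $u_1$ and $u_2$, hence no polarized variable $x_{jk}$ divides both $v_1$ and $v_2$. Proposition \ref{p5} lets me discard the variables outside $\supp(v_1 v_2)$, so without loss of generality $v_i=x_{F_i}$ with $F_1\sqcup F_2=[n]$ and $|F_1|=d\leq|F_2|=:d_2$. Corollary \ref{c1} together with Theorem \ref{sqt}(1) identifies $\slength(I^p)$ with the minimum number of intervals in a partition of $\Delta:=\{F\subset[n]\;:\;x_F\in I^p\}=P_1\cup P_2$, where $P_i:=[F_i,[n]]$.

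The combinatorial lower bound rests on two observations. First, any interval contained in $\Delta$ lies entirely in $P_1$ or entirely in $P_2$, because every superset of an element of $P_i$ remains in $P_i$. Second, the $d$ maximal elements of $P_2\setminus P_1$, namely $\{[n]\setminus\{j\}\;:\;j\in F_1\}$, are pairwise incomparable, so any interval covering two of them must have top $[n]=F_1\cup F_2$, the unique element of $P_1\cap P_2$; the analogous statement holds for the $d_2$ maximal elements of $P_1\setminus P_2$. Given any interval partition of $\Delta$, let $[A_*,[n]]$ be the unique interval containing $[n]$. If $A_*=[n]$, each of the $d$ top elements of $P_2\setminus P_1$ is forced into its own further interval, giving $\geq d+1$. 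If $A_*=F_2\cup T$ with $T\subsetneq F_1$, then $[A_*,[n]]$ covers exactly the top elements of $P_2\setminus P_1$ indexed by $F_1\setminus T$, leaving $|T|$ of them to occupy distinct further intervals inside $P_2$; the $d_2$ top elements of $P_1\setminus P_2$ must occupy $d_2$ further pairwise-distinct intervals inside $P_1$, disjoint from the $P_2$-intervals by the first observation, for a total $\geq 1+|T|+d_2\geq 1+d$. The symmetric case $A_*\in P_1\setminus\{[n]\}$ yields the same bound. The main obstacle is this final case analysis — in particular justifying that the three groups of intervals (the top interval, the remaining $P_2\setminus P_1$-tops, and the $P_1\setminus P_2$-tops) are all pairwise distinct — which reduces directly to the first observation.
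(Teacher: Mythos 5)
Your argument is correct and follows essentially the same strategy as the paper: reduce to the coprime case, pass to the (squarefree) polarization via Theorem \ref{t1}, and obtain the lower bound by observing that two codimension-one faces of $\Delta(I^p)$ can share an interval only if that interval has top $[n]$, so all but the ones absorbed by the unique interval containing $[n]$ force separate intervals — your three-way case analysis on $A_*$ matches the paper's dichotomy $F_1=[n]$ versus $F_1\subsetneq[n]$. The only (harmless) divergence is the upper bound, which you derive directly from the short exact sequence $0\to(w_1)\to I\to I/(w_1)\to 0$ together with Propositions \ref{p1} and \ref{p7}, whereas the paper quotes Theorem \ref{t4}; both give $\slength(I)\leq d+1$.
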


\begin{proof}
Let $v=\gcd(u_1,u_2)$. Since $I=v(I:v)$, by Proposition \ref{p4}, we can assume that $v=1$ and, thus, $I$ is
a monomial complete intersection. For simplicity, we also assume that $1\leq d_1=\deg(u_1)\leq \deg(u_2)$.

From Theorem \ref{t4} it follows that $\slength(I)\leq d_1+1$. 
We consider $I^p$, the polarization of $I$. Since, by Theorem \ref{t1}, we have $\slength(I)\geq \slength(I^p)$,
in order to prove that $\slength(I)\leq d_1+1$, we can assume that $I$ is squarefree.
Let $$\mathcal P\;:\;\Delta(I)=\bigcup_{i=1}^r [F_i,G_i],$$ be a partition of $\Delta(I)$.
Without any loss of generality, we can assume that $G_1=[n]$. Let $I_0=(x_1x_2\cdots x_n)$.
If $F_1=[n]$, then 
$$\mathcal P'\;:\;\Delta(I/I_0)=\bigcup_{i=2}^r [F_i,G_i],$$ is a partition of $\Delta(I/I_0)$.
Since $\Delta(I)$ contains all the subsets of $[n]$ of cardinality $n-1$, it follows that 
$\Delta(I/I_0)$ has $n$ facets. Hence, by Proposition \ref{p34}, we have that 
$r-1\geq \slength(I/I_0) \geq n$.
Thus $r\geq n = d_1+d_2 > d_1+1$. 

Now, assume that $F_1\subsetneq [n]$. Since $|F_1|\geq d_1$, the set
$$\{F\subset [n]\;:\;|F|=n-1,\;F\in [F_1,G_1]\},$$ has at most $n-d_1$ elements. Hence, 
$\Delta(S/I)$ contains at least $d_1$ subsets with $n-1$ elements which are not contained in $[F_1,G_1]$. This implies $r\geq d_1+1$,
as required. Thus, the proof is complete.
\end{proof}

\begin{exm}\label{ext-doi}\rm
Let $I=(x_1^2x_2^3x_3,x_2^2x_3^2x_4)\subset S=K[x_1,x_2,x_3,x_4]$. We have $u_1=x_1^2x_2^3x_3$, $u_2=x_2^2x_3^2x_4$ and thus
$\gcd(u_1,u_2)=x_2^2x_3$. According to Theorem \ref{t-doi} we have
$\slength(I)
=\min\{6,5\}-3+1=3$.
\end{exm}

\begin{prop}\label{3sq}
Let $I$ be a squarefree monomial ideal, minimally generated by three monomials. 
Then $\slength(I)\leq n+1$.
\end{prop}

\begin{proof}
According to \cite[Theorem 4.1]{shen}, we have that $\sdepth(I)=n-1$. Hence, there exists a partition
$$\mathcal P\;:\;\Delta(I)=\bigcup_{i=1}^r [F_i,G_i],$$
where $|G_i|\geq n-1$ for all $1\leq i\leq r$. Since $[n]$ has $n+1$ subsets with at least $n-1$ elements, we get the required conclusion.
\end{proof}

\begin{obs}\label{trei}\rm
It would be interesting to find a formula for $\slength(I)$, where $I\subset S$ is a monomial ideal with $G(I)=\{u_1,u_2,u_3\}$.
However, this seems very difficult, even if we assume that $I$ is a complete intersection. Let $d_i=\deg(u_i)$, $1\leq i\leq 3$,
and assume that $d_1\leq d_2\leq d_3$. According to Theorem \ref{t4} and Proposition \ref{p-trei}, it follows that
$$\slength(I)\leq \min\{d_1+d_1d_2,d_1+d_2+d_3\}+1.$$
We can conjecture that $\slength(I) = \min\{d_1+d_1d_2,d_1+d_2+d_3\}+1$. In order to tackle this problem, using Theorem \ref{t1}, it is enough
to assume that $I$ is squarefree and, moreover, that $d_1+d_2+d_3=n$ and 
$$u_1=x_1\cdots x_{d_1},\; u_2=x_{d_1+1}\cdots x_{d_1+d_2}\text{ and }u_3=x_{d_1+d_2+1}\cdots x_{n}.$$
Given an arbitrary interval partition $$\mathcal P\;:\;\Delta(I)=\bigcup\limits_{i=1}^r [F_i,G_i],$$
of $\Delta(I)$, we would have to show that $r\geq \min\{d_1+d_1d_2,n\}+1$. If $d_1=1$ them, according to Lemma \ref{lem2}, we have
$$r\geq \slength(I)=1+\slength((u_2,u_3)K[x_2,\ldots,x_n])=1+d_2,$$
and we are done. Hence, we can assume that $d_1\geq 2$. Also, if $n>d_1+d_1d_2$ then, by Proposition \ref{p4} and Proposition \ref{p5}, we have that
\begin{align*}
\slength(I) &\geq \slength((I:x_{d_1+d_1d_2+1}\cdots x_n))\\
&= \slength((I:x_{d_1+d_1d_2+1}\cdots x_n)\cap K[x_1,\ldots,x_{d_1+d_1d_2}]).
\end{align*}
Hence, we can reduce to the case that $n\leq d_1+d_1d_2$ and we have to show that $r\geq n+1$.

Unfortunately, we were not able, neither to prove this, nor to find a counterexample.
\end{obs}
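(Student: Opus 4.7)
The plan is to execute the reduction strategy sketched in the remark itself and then to attack the residual core inequality by induction. Since the conjectured formula matches the upper bound of Theorem \ref{t4} (and, via polarization, of Proposition \ref{p-trei}), only the lower inequality $\slength(I)\geq\min\{d_1+d_1d_2,n\}+1$ needs to be proved. The reductions in the remark --- passage to the squarefree case through Theorem \ref{t1}, reduction to the canonical form $u_i=x_{A_i}$ with the $A_i$'s partitioning $[n]$, treatment of $d_1=1$ via Lemma \ref{lem2} and Theorem \ref{t-doi}, and contraction of the variable set via Propositions \ref{p4} and \ref{p5} in the range $n>d_1+d_1d_2$ --- collapse the problem to a single combinatorial assertion: under the hypotheses $d_1\geq 2$ and $n\leq d_1+d_1d_2$, every interval partition $\mathcal P\;:\;\Delta(I)=\bigcup_{i=1}^r[F_i,G_i]$ has $r\geq n+1$.

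I would tackle this core assertion by strong induction on $d_3$ (equivalently on $n$), with the base case $d_3=d_2$ checked directly for small values. For the inductive step, since $d_3\geq d_1\geq 2$, one can pick $v=x_n\in A_3$ and form the colon ideal $(I:x_n)=(u_1,u_2,u_3/x_n)$, which is again a squarefree $3$-generated complete intersection with parameters $(d_1,d_2,d_3-1)$ and, by Proposition \ref{p5}, effectively in $n-1$ variables. The induction hypothesis then yields $\slength((I:x_n))\geq n$, and Proposition \ref{p4} gives $\slength(I)\geq n$. The whole problem then reduces to upgrading this to the strict inequality $\slength(I)\geq\slength((I:x_n))+1$.

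The natural route to this extra unit is to inspect the proof of Proposition \ref{p4}: the Stanley decomposition of $(I:x_n)$ constructed from a decomposition $\mathcal D$ of $I$ discards exactly those pieces $u_iK[Z_i]$ with $x_n\nmid u_i$ and $x_n\notin Z_i$. It would therefore suffice to show that every Stanley decomposition of $I$ of minimal length contains at least one such ``$x_n$-free'' piece. The natural candidate is a piece housing the minimal generator $u_2=x_{A_2}$, which sits entirely in $K[A_1\cup A_2]\subseteq K[x_1,\ldots,x_{n-1}]$; an exchange argument in the spirit of Lemma \ref{lem} and of the squarefree recipe of Theorem \ref{sqt}(2) should allow one to modify a given minimal decomposition so that $u_2$ is placed in a Stanley space $u_2K[Z]$ with $Z\subseteq A_1\cup A_2$, producing the desired $x_n$-free piece.

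The main obstacle --- and evidently the reason the authors left the assertion as a conjecture --- is precisely this forcing step. It is not at all clear that a decomposition of minimal length can always be adjusted to isolate $u_2$ (or $u_1$, or $u_3/x_n$) on an $x_n$-free Stanley space, and naive exchanges can easily preserve the length while destroying the desired property. Two contingency plans I would pursue in parallel are: (i) running the colon reduction simultaneously with candidate variables drawn from each of $A_1,A_2,A_3$ and combining the counts of lost pieces by a pigeonhole argument, so that the failure of one variable is compensated by another; and (ii) passing to the combinatorial model of Corollary \ref{c1} and Proposition \ref{propi} and exploiting the fact that every subset of $[n]$ of size $\geq n-2$ lies in $\Delta(I)$ in order to force many intervals via a refined face-counting bound at the top two levels. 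In either route, the final clean argument appears to require a genuinely new idea beyond the tools assembled in the paper.
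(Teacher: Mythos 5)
This ``statement'' is a remark whose only provable content is the upper bound and the chain of reductions; the conjectured equality is explicitly left open in the paper (``we were not able, neither to prove this, nor to find a counterexample''). For the provable part you do exactly what the paper does: the upper bound from Theorem \ref{t4} combined with Proposition \ref{p-trei}, passage to the squarefree case via Theorem \ref{t1}, the case $d_1=1$ via Lemma \ref{lem2} together with the two-generator formula of Theorem \ref{t-doi}, and the contraction to $n\leq d_1+d_1d_2$ via Propositions \ref{p4} and \ref{p5}. So on the established content your proposal matches the paper's own justification, and you correctly — and honestly — land on the same residual open problem: showing $r\geq n+1$ for every interval partition of $\Delta(I)$ when $d_1\geq 2$ and $n\leq d_1+d_1d_2$.

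Your additional inductive sketch does not close this gap, as you yourself concede, and it is worth being precise about where it breaks. First, coloning out $x_n\in A_3$ replaces $(d_1,d_2,d_3)$ by $(d_1,d_2,d_3-1)$ and $n$ by $n-1$, so you must re-verify both the ordering $d_1\leq d_2\leq d_3-1$ (which fails when $d_2=d_3$) and the regime condition $n-1\leq d_1+d_1d_2$ before the induction hypothesis applies; the target value $\min\{d_1+d_1d_2,n\}$ also shifts, so the bookkeeping of ``which minimum is active'' has to be redone at each step. Second, and more seriously, the upgrade from $\slength(I)\geq\slength(I:x_n)$ to $\slength(I)\geq\slength(I:x_n)+1$ requires showing that every minimal-length Stanley decomposition of $I$ contains an $x_n$-free piece, and the exchange argument you gesture at (repositioning $u_2$ onto a Stanley space inside $K[A_1\cup A_2]$) is not known to preserve both directness and length. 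This forcing step is exactly the missing idea, so your proposal should be read as a restatement of the reductions already in Remark \ref{trei} plus a plausible but unproven plan of attack, not as a proof.
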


\section{Monomial ideals with linear quotients}

We recall that a monomial ideal $I\subset S$ has linear quotients, if there exists a linear order
 $$u_1 \leqslant u_2 \leqslant \cdots  \leqslant u_m,$$
 on $G(I)$, that is,
for every $2 \leq j \leq m$, the ideal $(u_1, \ldots , u_{j-1}) : u_j$ is generated by a subset of $n_j$ variables.
We let $I_j:=(u_1,\ldots,u_j)$, for $1\leq j\leq m$. 
Let $Z_1=\{x_1,\ldots,x_n\}$ and 
$Z_j=\{x_i\;:\;x_i\notin (I_{j-1} : u_j)\}$ for $2\leq j\leq m$.

Note that, for any $2\leq j\leq m$, we have
$$I_j/I_{j-1} = u_j (S/(I_{j-1}:u_j)) = u_j K[Z_j].$$
Hence, the ideal $I$ has the Stanley decomposition
\begin{equation}\label{sdep}
I = u_1K[Z_1]\oplus u_2K[Z_2] \oplus \cdots \oplus u_mK[Z_m].
\end{equation}
In particular, from Proposition \ref{p1}, it follows that $\slength(I)=|G(I)|$.

\begin{teor}\label{popo}
Let $I\subset S$ be a monomial ideal, minimally generated be $m$ monomials with $G(I)=\{u_1,\ldots,u_m\}$. 
Then the following are equivalent:
\begin{enumerate}
\item There exists a linear order $u_1 \leqslant u_2 \leqslant \cdots  \leqslant u_m$.
\item The ideal $I_j=(u_1,u_2,\ldots,u_j)$ has a Stanley decompositions of the form
      $$I_j=u_1K[Z_1]\oplus u_2K[Z_2]\oplus \cdots \oplus u_jK[Z_j],\text{ for all }1\leq j\leq m.$$
\end{enumerate}
\end{teor}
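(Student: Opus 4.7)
The plan is to prove both implications separately, using equation \eqref{sdep} as the starting point.

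For $(1)\Rightarrow(2)$: This follows almost immediately from \eqref{sdep}. I would set $Z_1=\{x_1,\ldots,x_n\}$, and for $2\leq j\leq m$ define $Z_j=\{x_i:x_i\notin(I_{j-1}:u_j)\}$, which makes sense because the linear quotients hypothesis guarantees $(I_{j-1}:u_j)$ is generated by a subset of variables. For every $j$, the restricted ordering $u_1\leqslant\cdots\leqslant u_j$ is still a linear quotients ordering for $I_j$, so applying \eqref{sdep} at that level yields $I_j=u_1K[Z_1]\oplus\cdots\oplus u_jK[Z_j]$. Since $Z_j$ depends only on $I_{j-1}$ and $u_j$, a single choice of $Z_1,\ldots,Z_m$ serves all $j$ simultaneously.

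For $(2)\Rightarrow(1)$: Fix $Z_1,\ldots,Z_m$ as supplied by the hypothesis; the $j=1$ case $I_1=u_1K[Z_1]$ forces $Z_1=\{x_1,\ldots,x_n\}$. The target is to show that $(I_{j-1}:u_j)=(x_k:x_k\notin Z_j)$ for every $2\leq j\leq m$, which is exactly the linear quotients condition. Since each summand $u_iK[Z_i]$ with $i<j$ is contained in $(u_i)\subset I_{j-1}$, and the decompositions of $I_j$ and $I_{j-1}$ share the first $j-1$ Stanley spaces, a comparison of the corresponding monomial bases shows that the monomials of $I_j\setminus I_{j-1}$ are exactly the monomials of $u_jK[Z_j]$. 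Translating through the bijection $v\leftrightarrow u_jv$ then identifies $\{v:\supp(v)\subset Z_j\}$ with $\{v:v\notin(I_{j-1}:u_j)\}$. Because $(I_{j-1}:u_j)$ is a monomial ideal, this forces $(I_{j-1}:u_j)=(x_k:x_k\notin Z_j)$, as desired.

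The main subtlety lies in the $(2)\Rightarrow(1)$ direction: one must genuinely use the fact that the \emph{same} sets $Z_1,\ldots,Z_{j-1}$ appear in the decompositions of both $I_j$ and $I_{j-1}$ in order to match $u_jK[Z_j]$ precisely with $I_j\setminus I_{j-1}$. Without this synchronization, a monomial in $u_jK[Z_j]$ could \emph{a priori} also lie in $I_{j-1}$, and the clean identification of $(I_{j-1}:u_j)$ as a variable ideal would break down.
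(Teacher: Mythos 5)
Your proposal is correct and follows essentially the same route as the paper: the forward direction is read off from the decomposition \eqref{sdep} applied to each truncated ordering, and the reverse direction pins down $(I_{j-1}:u_j)$ as a variable ideal by using both the directness of the sum for $I_j$ and the level-$(j-1)$ hypothesis (with the same $Z_i$'s) to identify the monomials of $u_jK[Z_j]$ with those of $I_j\setminus I_{j-1}$. The only cosmetic difference is that the paper organizes $(2)\Rightarrow(1)$ as an induction on $m$ and proves the two inclusions of $(I_{m-1}:u_m)=(x_k:x_k\notin Z_m)$ separately, whereas you compare monomial sets directly for each $j$; the underlying argument, including the synchronization point you rightly emphasize, is the same.
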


\begin{proof}
$(1)\Rightarrow (2)$. It follows from the above discussion. 

$(2)\Rightarrow (1)$. We proceed by induction on $m$. The case $m=1$ is trivial. Assume $m\geq 2$. By induction hypothesis, it follows that
$u_1 \leqslant u_2 \leqslant \cdots  \leqslant u_{m-1}$ is a linear order on $I_{m-1}$. In order to prove that $I$ has linear quotients, 
it is enough to show that 
$$(I_{m-1}:u_m) = (x_j\;:\;x_j\notin Z_m).$$
Indeed, if $x_j\notin Z_m$, then $x_ju_m\notin u_mK[Z_m]$ and therefore, from the hypothesis 
$$I=I_m=u_1K[Z_1]\oplus u_2K[Z_2]\oplus \cdots \oplus u_mK[Z_m],$$
it follows that
$$x_ju_m \in u_1K[Z_1]\oplus \cdots \oplus u_{m-1}K[Z_{m-1}] = I_{m-1}.$$
Hence $(x_j\;:\;x_j\notin Z_m)\subset (I':u_m)$. In order to prove the converse inclusion, assume that
there exists $0\neq w\in K[Z_m]$ such that $w\in (I':u_m)$. It follows that 
$$wu_m \in u_mK[Z_m] \cap I' = u_mK[Z_m]\cap (u_1K[Z_1]\oplus \cdots \oplus u_{m-1}K[Z_{m-1}]),$$
a contradiction.
\end{proof}

Let $I\subset S$ be a monomial ideal, minimally generated by $m$ monomials.
As we already seen, if $I$ has linear quotients then $\slength(I)=m$.
It is natural to ask if the converse assertion is true, without the restrictive conditions
given in Theorem \ref{popo}(2).

We will see, later on (Example \ref{ex1}(4)), that, in general, 
the answer is negative. However, in some special cases, the answer is positive:

\begin{prop}\label{p41}
Let $I\subset S$ be a monomial ideal, minimally generated by $m$ monomials.
If $\slength(I)=m\leq 3$, then $I$ has linear quotients.
\end{prop}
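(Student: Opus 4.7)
The plan is to apply Theorem \ref{popo}: to exhibit a linear-quotients order $u_{\sigma(1)},u_{\sigma(2)},u_{\sigma(3)}$ it suffices to verify that $\slength(I_j)=j$ for $j=1,2,3$, where $I_j=(u_{\sigma(1)},\ldots,u_{\sigma(j)})$. The cases $j=1$ (principal) and $j=3$ (hypothesis) are automatic, so the problem reduces to producing a pair $\{i,j\}\subset\{1,2,3\}$ with $\slength((u_i,u_j))=2$. By Theorem \ref{t-doi}, this amounts to finding a pair such that one of $u_i/\gcd(u_i,u_j)$ or $u_j/\gcd(u_i,u_j)$ is a single variable. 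The cases $m\le 2$ fit into this framework with no pair-search needed, so the substantive case is $m=3$.

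For $m=3$, I argue by contradiction: suppose that for every pair $\{i,j\}$, both $u_i/\gcd(u_i,u_j)$ and $u_j/\gcd(u_i,u_j)$ have degree at least $2$. By Proposition \ref{p2}, the hypothesis $\slength(I)=3$ yields a Stanley decomposition
$$\mathcal D\;:\;I=u_1K[Z_1]\oplus u_2K[Z_2]\oplus u_3K[Z_3]$$
whose leading monomials are, after relabeling, the minimal generators and in which $Z_1=\{x_1,\ldots,x_n\}$, so $u_1K[Z_1]=u_1S$.

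The key step is to show $\supp(u_1/\gcd(u_1,u_2))\subseteq Z_2$; then $\lcm(u_1,u_2)=u_2\cdot u_1/\gcd(u_1,u_2)$ lies in $u_2K[Z_2]$ and also in $u_1S$, contradicting directness of $\mathcal D$. To verify the inclusion, fix any $x_k$ dividing the nontrivial monomial $u_1/\gcd(u_1,u_2)$ and consider $u_2x_k\in I$. A coordinatewise comparison shows that $u_1\mid u_2x_k$ forces $u_1/\gcd(u_1,u_2)\in\{1,x_k\}$; the first option is excluded by minimality of $G(I)$ and the second by the standing assumption $\deg(u_1/\gcd(u_1,u_2))\ge 2$, so $u_2x_k\notin u_1S$. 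The symmetric argument, using $\deg(u_3/\gcd(u_2,u_3))\ge 2$, yields $u_2x_k\notin u_3K[Z_3]$. Hence $u_2x_k\in u_2K[Z_2]$, which forces $x_k\in Z_2$.

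The resulting contradiction produces a pair $(u_i,u_j)$ with the desired divisibility property; placing it in the first two slots of the ordering, Theorem \ref{popo} concludes. The main obstacle is the divisibility bookkeeping in the key step; the payoff is that $\lcm(u_1,u_2)$ is forced into two distinct summands of $\mathcal D$, so the contradiction is immediate once the support inclusion is established.
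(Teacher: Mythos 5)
Your reduction to Theorem \ref{popo} contains a genuine gap. You read condition (2) of that theorem as ``$\slength(I_j)=j$ for all $j$'' (equivalently: each $I_j$ separately admits \emph{some} length-$j$ decomposition), so that it suffices to produce \emph{any} pair $\{u_i,u_j\}$ with $\slength((u_i,u_j))=2$ and put it first. But the proof of Theorem \ref{popo} uses the identity $u_1K[Z_1]\oplus\cdots\oplus u_{m-1}K[Z_{m-1}]=I_{m-1}$ with the \emph{same} sets $Z_i$ that appear in the decomposition of $I_m$; i.e., it needs one compatible system of Stanley decompositions that works for all $j$ simultaneously. Your weaker reading is not just unproved but false, and your selection rule can pick a pair for which the third colon ideal fails. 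Concretely, take
$$I=(x_1x_2,\;x_1x_3x_4,\;x_3x_4x_5)\subset K[x_1,\ldots,x_5].$$
The order $x_1x_2\leqslant x_1x_3x_4\leqslant x_3x_4x_5$ gives linear quotients (the colon ideals are $(x_2)$ and $(x_1)$), so $\slength(I)=3$; here $u_1=x_1x_2$ is the generator with $Z_1=\{x_1,\ldots,x_5\}$. The pair $(x_1x_3x_4,x_3x_4x_5)$ has $\gcd=x_3x_4$ and both quotients equal to a variable, so by Theorem \ref{t-doi} its Stanley length is $2$; yet
$$\bigl((x_1x_3x_4,\;x_3x_4x_5):x_1x_2\bigr)=(x_3x_4)$$
is not generated by variables, so placing this pair in the first two slots does \emph{not} yield linear quotients. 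Moreover, if in your argument you label $u_2=x_3x_4x_5$ and $u_3=x_1x_3x_4$, then $\deg(u_1/\gcd(u_1,u_2))=2$ while $\deg(u_3/\gcd(u_2,u_3))=1$, so your contradiction terminates precisely by handing you this bad pair $\{u_2,u_3\}$. (This example also shows that the paraphrase of Theorem \ref{popo} in the paper's introduction cannot be taken literally.)

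What is actually needed — and what the paper's proof supplies — is a pair \emph{containing} $u_1$ (the generator with the full variable set) together with the stronger statement that the given decomposition of $I$ \emph{restricts} to it, i.e., $(u_1,u_2)=u_1K[Z_1]\oplus u_2K[Z_2]$ or $(u_1,u_3)=u_1K[Z_1]\oplus u_3K[Z_3]$ with the original $Z_i$'s; then the verification of $((u_1,u_j):u_k)$ in Theorem \ref{popo} goes through. Your ``key step'' is sound as a computation (the coordinatewise divisibility argument and the conclusion $\lcm(u_1,u_2)\in u_1K[Z_1]\cap u_2K[Z_2]$ are correct, and are close in spirit to the paper's own contradiction, which exhibits $x_{j_2}u_2=x_{j_3}u_3$ in the intersection of two summands), but it is aimed at the wrong target: it only yields the disjunction ``$\deg(u_1/\gcd(u_1,u_2))=1$ or $\deg(u_3/\gcd(u_2,u_3))=1$,'' and the second alternative produces exactly the kind of pair that cannot, in general, be placed first.
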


\begin{proof}
We have $I=u_1K[Z_1]\oplus \cdots\oplus u_mK[Z_m]$, where $G(I)=\{u_1,\ldots,u_m\}$.
Without any loss of generality, we can assume that $Z_1=\{x_1,\ldots,x_n\}$ and $Z_i\subsetneq \{x_1,\ldots,x_n\}$, for $2\leq i\leq m$.

If $m=1$, then $I=(u_1)=u_1K[x_1,\ldots,x_n]$ and there is nothing to prove. 
If $m=2$, then the conclusion follows from Theorem \ref{popo}.

Assume $m=3$. In order to complete the proof, according to Theorem \ref{popo}, it is enough to show that
\begin{equation}\label{clem}
(u_1,u_2)=u_1K[Z_1]\oplus u_2K[Z_2]\text{ or }(u_1,u_3)=u_1K[Z_1]\oplus u_3K[Z_2].
\end{equation}
Assume, by contradiction, that \eqref{clem} is false. It follows
that there exists some monomials $w_2,w_3\in S$ such that
$$ w_2 \in (u_1,u_3)\setminus (u_1K[Z_1]\oplus u_3K[Z_3])\text{ and }w_3 \in (u_1,u_2)\setminus (u_1K[Z_1]\oplus u_3K[Z_2]).$$
Since $I=u_1K[Z_1]\oplus u_2K[Z_2]\oplus u_3K[Z_3]$ and $Z_1=\{x_1,\ldots,x_n\}$, it follows that
$$ w_2\in u_2K[Z_2],\;w_2\in (u_3),\;w_3\in u_3K[Z_3],\;w_3\in (u_2).$$
Since $w_2\notin u_3K[Z_3]$ and $u_3\mid w_2$ it follows that there exists some variable $x_{j_3}\in Z_3$, such that $x_{j_3}\mid w_2$.
Moreover, as $w_2\notin u_1K[Z_1]$ and $x_{j_3}u_3\mid w_2$, it follows that $x_{j_3}u_3\in u_2K[Z_2]$ and thus
$x_{j_3}u_3=\alpha u_2$, for some monomial $\alpha\in K[Z_2]$.

Similarly, there exists some variable $x_{j_2}\notin Z_2$, such that $x_{j_2}u_2 \in u_3K[Z_3]$ and thus 
$x_{j_2}u_2=\beta u_3$, for some monomial $\beta\in K[Z_2]$.

Since $x_{j_3}u_3=\alpha u_2$, $x_{j_2}u_2=\beta u_3$ and $u_2\neq u_3$, it follows that 
$$x_{j_2}u_2 = x_{j_3}u_3 \in u_2K[Z_2] \cap u_3K[Z_3],$$
a contradiction.
\end{proof}

\begin{prop}\label{p42}
Let $I\subset S=K[x_1,x_2]$ be a monomial ideal, minimally generated by $m$ monomials.
If $\slength(I)=m$, then $I$ has linear quotients.
\end{prop}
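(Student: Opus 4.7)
The plan is to reduce to a normalized form using Theorem~\ref{p28} and then exhibit an explicit linear-quotients ordering. First, by Proposition~\ref{p3}, dividing every minimal generator by the common factor $v := x_1^{a_m} x_2^{b_1}$ preserves $\slength(I)$; moreover, for any ordering of generators this division does not affect the colon ideals $(u_{\sigma(1)},\ldots,u_{\sigma(j-1)}):u_{\sigma(j)}$, so it also preserves the linear quotients property. Therefore, I may assume $a_m = 0$ and $b_1 = 0$, keeping the indexing of Theorem~\ref{p28}.

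Next, I invoke Theorem~\ref{p28}: the hypothesis $\slength(I) = m$ becomes $\min_{1 \le i \le m}(a_i + b_i) = m - 1$. Since $a_1 > a_2 > \cdots > a_m = 0$ and $0 = b_1 < b_2 < \cdots < b_m$ are strictly monotone sequences of nonnegative integers, I get $a_k \ge m-k$ and $b_k \ge k-1$ for every $k$, whence $a_k + b_k \ge m-1$. Equality at some index $i$ therefore forces $a_i = m-i$ and $b_i = i-1$; a short squeezing argument on the strictly monotone sequences then yields the rigid pattern
$$a_k = m - k \text{ for all } k \ge i, \qquad b_k = k - 1 \text{ for all } k \le i.$$

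Finally, I propose the ordering
$$u_i,\; u_{i-1},\; \ldots,\; u_1,\; u_{i+1},\; u_{i+2},\; \ldots,\; u_m,$$
and claim it realizes the linear quotients. The verification is by direct computation using the structural description above. When inserting $u_{i-k}$ for $1 \le k \le i-1$, each previously-used $u_j$ satisfies $a_j < a_{i-k}$ and $b_j > b_{i-k}$, so $u_j/\gcd(u_j, u_{i-k}) = x_2^{b_j - b_{i-k}}$; the smallest such exponent, coming from $j = i-k+1$, equals $1$, so the colon ideal is $(x_2)$. Symmetrically, when inserting $u_{i+k}$ for $1 \le k \le m-i$, each previously-used $u_j$ gives $u_j/\gcd(u_j, u_{i+k}) = x_1^{a_j - a_{i+k}}$, whose minimum (coming from $j=i$ or $j = i+k-1$) is $x_1$, so the colon ideal is $(x_1)$. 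The main obstacle is extracting this rigid exponent pattern from the equality case of Theorem~\ref{p28}; once the structural lemma is in hand, verifying that the proposed ordering gives linear quotients is routine.
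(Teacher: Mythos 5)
Your proposal is correct and follows essentially the same route as the paper's proof: normalize so that $a_m=b_1=0$, use Theorem~\ref{p28} to force $\min_i(a_i+b_i)=m-1$ and hence the rigid exponent pattern, and then order the generators starting from the minimizing $u_i$ outward ($u_i,u_{i-1},\dots,u_1,u_{i+1},\dots,u_m$) so that the successive colon ideals are $(x_2)$ and then $(x_1)$. Your explicit squeezing argument for the exponent pattern is in fact slightly more detailed than the paper's, which merely writes down the resulting form of $G(I)$.
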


\begin{proof}
As in the proof of Theorem \ref{p28}, we may assume that 
$$G(I)=\{x_1^{a_1}x_2^{b_1},\;x_1^{a_2}x_2^{b_2},\ldots,x_1^{a_m}x_2^{b_m}\},$$
where $a_1>a_2>\cdots>a_m=0$ and $0=b_1<b_2<\ldots<b_m$. Moreover, from Theorem \ref{p28}, it follows
that $m=\min\{a_i+b_i-1\;:\;1\leq i\leq m\}$. Thus
$$G(I)=\{x_1^{a_1},x_1^{a_2}x_2,x_1^{a_3}x_2^2,\ldots,x_1^{a_i}x_2^{b_i},x_1^{a_i-1}x_2^{b_{i+1}},\ldots,x_1x_2^{b_{m-1}},x_2^{b_m}\}.$$
We consider the following order of $G(I)$:
\begin{align*}
& u_1=x_1^{a_i}x_2^{b_i},\;u_2=x_1^{a_{i-1}}x_2^{b_i-1},\ldots,u_{b_i}=x_1^{a_2}x_2,\;u_{b_i+1}=x_1^{a_1},\\
& u_{b_i+2}=x_1^{a_i-1}x_2^{b_{i+1}},\ldots, u_{a_i+b_i}=x_1x_2^{b_{m-1}},\;u_{a_i+b_i+1}=x_2^{b_m}.
\end{align*}
It follows that
\begin{align*}
& (u_1:u_2)=(x_2),\;(u_1,u_2):(u_3)=(x_2),\ldots,(u_1,\ldots,u_{b_i}):(u_{b_i+1})=(x_2),\\
& (u_1,\ldots,u_{b_i+1}):(u_{b_i+2})=(x_1),\ldots,(u_1,\ldots,u_{a_i+b_i}):(u_{a_i+b_i+1})=(x_1).
\end{align*}
Hence, $I$ has linear quotients, as required.
\end{proof}

\begin{exm}\label{ex1}\rm
\begin{enumerate}
\item[(1)] Let $\mathbf m=(x_1,x_2,x_3)\subset S=K[x_1,x_2,x_3]$. Since $\mathbf m$ has linear quotients, it follows that $\slength(\mathbf m)=3$.
For instance, $$\mathcal D\;:\;\mathbf m=x_1K[x_1,x_2,x_3]\oplus x_2K[x_2,x_3]\oplus x_3K[x_3],$$ is a Stanley decomposition of $\mathbf m$ of
length $3$. However, we have $$\sdepth(\mathcal D)=1<2=\sdepth(\mathbf m).$$ Note that, the shortest Stanley decomposition of $\mathbf m$ with its Stanley depth
equal to $2$, has the length $4$. Indeed, we have for instance:
$$\mathcal D': \mathbf m=x_1K[x_1,x_2]\oplus x_2K[x_2,x_3]\oplus x_3K[x_1,x_3]\oplus x_1x_2x_3K[x_1,x_2,x_3].$$
\item[(2)] Let $I=(x_1^2x_2,x_1x_2x_3,x_1^3x_3)\subset S=K[x_1,x_2,x_3]$. Obviously, $I$ is an ideal with linear quotients and, moreover
$$\mathcal D\;:\;I=x_1^2x_2K[x_1,x_2,x_3]\oplus x_1x_2x_3K[x_2,x_3]\oplus x_1^3x_3K[x_1,x_3],$$
is a Stanley decomposition of $I$ with $\slength(\mathcal D)=3=\slength(I)$ and $\sdepth(\mathcal D)=3=\sdepth(I)$.
\item[(3)] Let $I=(x_1x_2, x_1x_3, x_1x_4, x_2x_3x_4)\subset S=K[x_1,x_2,x_3,x_4]$. It is easy to see that $I$ has linear quotients, and 
\begin{align*}
\mathcal D\;:\;I= & x_1x_2K[x_1,x_2,x_3,x_4]\oplus x_1x_3 K[x_1,x_3,x_4]\\
& \oplus x_1x_4K[x_1,x_4]\oplus x_2x_3x_4 K[x_2,x_3,x_4],
\end{align*}
is the Stanley decomposition induced by the linear ordering on the generators of $I$. However, $I$ admits also the following decomposition:
\begin{align*}
\mathcal D'\;:\;I=& x_2x_3x_4K[x_1,x_2,x_3,x_4]\oplus x_1x_2 K[x_1,x_2,x_4] \\
& \oplus x_1x_3K[x_1,x_2,x_3]\oplus x_1x_4 K[x_1,x_3,x_4].
\end{align*}
Note that $\slength(\mathcal D)=\slength(\mathcal D')=4=\slength(I)$, but $\sdepth(\mathcal D)=2<\sdepth(\mathcal D')=\sdepth(I)=3$.
The decomposition $\mathcal D'$ is not induced by a linear order, since $(x_2x_3x_4:x_1x_2)=(x_3x_4)$.
\item[(4)] Let $I=(x_1^2x_2, x_1^2x_3, x_1^2x_4, x_2x_3x_4)\subset S=K[x_1,x_2,x_3,x_4]$. It is easy to see that $I$ does not have linear quotients.
On the other hand,
\begin{align*}
\mathcal D\;:\;I=& x_2x_3x_4K[x_1,x_2,x_3,x_4]\oplus x_1^2x_2 K[x_1,x_2,x_4] \\ 
& \oplus x_1^2x_3K[x_1,x_2,x_3]\oplus x_1^2x_4 K[x_1,x_3,x_4],
\end{align*}
is a Stanley decomposition of $I$. Hence $\slength(I)=4=$ the number of minimal monomial generators of $I$.
\end{enumerate}
\end{exm}

\begin{obs}\label{o44}\rm
Given a finitely generated multigraded $S$-module $M$, it would be interesting to find a Stanley decomposition of minimal length
which realize the Stanley depth of $M$. However, such a decomposition is not necessarily of minimal length. For instance, according
to Example \ref{ex1}(1), although $\slength(\mathbf m)=3$, where $\mathbf m=(x_1,x_2,x_3)\subset S=K[x_1,x_2,x_3]$,
a minimal Stanley decomposition $\mathcal D$ of $\mathbf m$ with $\sdepth(\mathcal D)=\sdepth(\mathbf m)=2$ must have the length at least $4$.
\end{obs}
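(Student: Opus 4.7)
The plan is to verify the quantitative claim implicit in the remark: if $\mathbf{m}=(x_1,x_2,x_3)\subset S=K[x_1,x_2,x_3]$ and $\mathcal D:\mathbf{m}=\bigoplus_{i=1}^r u_iK[Z_i]$ is a Stanley decomposition with $\sdepth(\mathcal D)\geq 2$ (that is, $|Z_i|\geq 2$ for every $i$), then $r\geq 4$; since Example \ref{ex1}(1) already exhibits such a decomposition of length $4$, this is sharp. The strategy is to assume $r=3$ and derive a contradiction by a short case analysis based on where the monomial $x_1x_2x_3$ is placed.

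First, I would set up using Proposition \ref{p2}: since $G(\mathbf{m})=\{x_1,x_2,x_3\}$, each minimal generator $x_j$ must be the leading element of some Stanley subspace, so after relabeling $u_1=x_1$, $u_2=x_2$, $u_3=x_3$. The element $x_1x_2x_3\in\mathbf{m}$ lies in a unique summand; after a further relabeling, assume $x_1x_2x_3\in x_1K[Z_1]$. Then $x_2x_3\in K[Z_1]$, so $\{x_2,x_3\}\subseteq Z_1$, leaving only two possibilities: $Z_1=\{x_2,x_3\}$ or $Z_1=\{x_1,x_2,x_3\}$.

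In the first sub-case $Z_1=\{x_2,x_3\}$, the monomial $x_1^2\in\mathbf{m}$ must lie in some summand, but $x_1^2\notin x_1K[x_2,x_3]$ and neither $x_2$ nor $x_3$ divides $x_1^2$, ruling out $x_2K[Z_2]$ and $x_3K[Z_3]$; contradiction. In the second sub-case $Z_1=\{x_1,x_2,x_3\}$, every monomial divisible by $x_1$ lies in $x_1K[Z_1]$; directness of the sum therefore forces $x_1\notin Z_2$ and $x_1\notin Z_3$. Combined with $|Z_2|,|Z_3|\geq 2$ this forces $Z_2=Z_3=\{x_2,x_3\}$, but then $x_2x_3\in x_2K[Z_2]\cap x_3K[Z_3]$, again contradicting directness. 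Hence $r\geq 4$.

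The argument is essentially routine once the case split on $x_1x_2x_3$ is made; the only point that requires a brief check is the directness obstruction in the second sub-case, where one must use the fact that $|Z_2|,|Z_3|\geq 2$ to rule out the ``slim'' alternatives $Z_i=\{x_2\}$ or $\{x_3\}$ that would otherwise allow $r=3$ with maximal depth. So the only genuine obstacle is keeping the bookkeeping clean between the depth constraint and the direct-sum constraint; there is no deeper combinatorial input needed.
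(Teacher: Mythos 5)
Your argument is correct, and it actually does more than the paper does: the paper's Remark~\ref{o44} simply cites Example~\ref{ex1}(1), which exhibits a depth-$2$ decomposition of length $4$ but only \emph{asserts}, without proof, that no depth-$2$ decomposition of length $3$ exists. Your case analysis supplies that missing lower bound. The key steps all check out: with $r=3$, Proposition~\ref{p2} forces the three corners to be exactly $x_1,x_2,x_3$; placing $x_1x_2x_3$ in (say) $x_1K[Z_1]$ forces $\{x_2,x_3\}\subseteq Z_1$; the sub-case $Z_1=\{x_2,x_3\}$ dies on $x_1^2$, and the sub-case $Z_1=\{x_1,x_2,x_3\}$ dies because directness against $x_1S$ forces $x_1\notin Z_2, Z_3$, whence the depth constraint $|Z_2|,|Z_3|\geq 2$ forces $Z_2=Z_3=\{x_2,x_3\}$ and $x_2x_3$ lands in two summands. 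The only point worth spelling out a little more is the initial relabeling: you use the symmetry of $\mathbf m$ in the variables to assume $x_1x_2x_3\in x_1K[Z_1]$, which is legitimate but should be stated as a symmetry reduction rather than a mere ``relabeling'' of indices. In short, your proof is sound, elementary, and fills a gap the paper leaves open.
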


\begin{teor}\label{t5}
Let $I\subset S$ be a squarefree complete intersection monomial ideal, minimally generated by the monomials $u_1,\ldots,u_m\in S$.
Let $d_i=\deg(u_i)$, $1\leq i\leq m$, and assume that there exists some $k\geq 0$ such that 
$$1=d_1=d_2=\cdots=d_k<d_{k+1}\leq d_{k+2} \leq \cdots \leq d_m.$$
Let $n'=d_{k+1}+d_{k+2}+\cdots+d_m$. Then $\slength(I)\leq k+\binom{n'+1}{\left\lfloor \frac{m-k}{2} \right\rfloor}$.

In particular, if $k=0$ and $n'=n$, then $\slength(I)\leq \binom{n+1}{\left\lfloor \frac{m}{2} \right\rfloor}$.
\end{teor}

\begin{proof}
Using Lemma \ref{lem2}, we can assume that $k=0$ and then, using Proposition \ref{p5}, we can assume also $n'=n$.
That is $$2\leq d_1\leq d_2\leq \cdots \leq d_m\text{ and }d_1+d_2+\cdots+d_m=n.$$
Note that, any squarefree monomial $u\in S$ with $\deg(u)\geq n-m+1$, belongs to $I$.
Since $I$ is a monomial complete intersection, according to \cite[Theorem 2.4]{shen}, we have 
$\sdepth(I)=n-\left\lfloor \frac{m}{2} \right\rfloor$. Using \cite[Theorem 3.5]{rin} and Proposition \ref{propi}, we can find a partition
$$\mathcal P\;:\;\Delta(I)=\bigcup_{i=1}^r [F_i,G_i]$$
of $\Delta(I)$, such that if $|F_i|<n-\left\lfloor \frac{m}{2} \right\rfloor$, then $|G_i|=n-\left\lfloor \frac{m}{2} \right\rfloor$, and,
if $|F_i|=n-\left\lfloor \frac{m}{2} \right\rfloor$, then $G_i=F_i$. Let $J$ be the squarefree Veronese ideal, generated by 
all the squarefree monomials of degree $n-\left\lfloor \frac{m}{2} \right\rfloor+1$. Note that $J\subset I$, since 
$n-m+1\geq n-\left\lfloor \frac{m}{2} \right\rfloor+1$ and all the squarefree monomials of degree $\geq n-m+1$ belongs to $I$.

From the above considerations, it follows that
$$\Delta(I/J)=\bigcup_{|F_i|\leq n-\left\lfloor \frac{m}{2} \right\rfloor} [F_i,G_i].$$
Since, in the above decomposition of $\Delta(J/I)$, all $G_i$'s have cardinality $n-\left\lfloor \frac{m}{2} \right\rfloor$, it follows that
\begin{equation}\label{ecus1}
\slength(I/J)\leq \binom{n}{n-\left\lfloor \frac{m}{2} \right\rfloor}=\binom{n}{\left\lfloor \frac{m}{2} \right\rfloor}.
\end{equation}
On the other hand, since $J$ has obviously linear quotients, with respect to the lexicographic order on the monomial generators of $J$,
it follows hat 
\begin{equation}\label{ecus2}
\slength(J)=\binom{n}{n-\left\lfloor \frac{m}{2}+1 \right\rfloor}=\binom{n}{\left\lfloor \frac{m}{2} \right\rfloor-1}.
\end{equation}
From \eqref{ecus1}, \eqref{ecus2} and Proposition \ref{p1}, applied to $0\to J\to I\to I/J\to 0$, it follows that
$$\slength(I)\leq \binom{n}{\left\lfloor \frac{m}{2} \right\rfloor} + \binom{n}{\left\lfloor \frac{m}{2} \right\rfloor-1}
=\binom{n+1}{\left\lfloor \frac{m}{2} \right\rfloor},$$ as required.
\end{proof}

\begin{obs}\rm
Note that, if $I\subset S$ is a squarefree complete intersection, minimally generated by $3$ monomials, then, 
from Theorem \ref{t5}, it follows that $\slength(I)\leq n+1$. In this case, the same conclusion follows also from 
Proposition \ref{p-trei} or Proposition \ref{3sq}.
\end{obs}

\begin{exm}\label{ext5}\rm
(1) Let $I=(x_1,\;x_2,\;x_3x_4,\;x_5x_6,x_7x_8x_9x_{10})\subset S=K[x_1,\ldots,x_{10}]$. We have $m=5$ and 
$1=d_1=d_2<d_3=d_4=2<d_5=4$. Hence $k=2$ and $n'=d_3+d_4+d_5=8$. According to Theorem \ref{t5} we have
$$\slength(I)\leq k + \binom{n'+1}{\left\lfloor \frac{m-k}{2} \right\rfloor} = 2 + \binom{9}{1} = 11.$$
(2) Let $I=(x_1x_2,x_3x_4,x_5x_6)\subset S=K[x_1,\ldots,x_6]$. We have $m=3$ and $d_1=d_2=d_3=2$.
According to Theorem \ref{t5} or Proposition \ref{p-trei} we have $\slength(I) \leq  7$.
\end{exm}

\subsection*{Data availability}

Data sharing not applicable to this article as no datasets were generated or analyzed during the current study.

\subsection*{Conflict of interest}

The author has no relevant financial or non-financial interests to disclose.

{}

\end{document}